\DeclareSymbolFont{T2Aletters}{T2A}{cmr}{m}{it}
\title{Erd\H os Matching Conjecture for almost perfect matchings}
\author{Dmitriy Kolupaev\footnote{Moscow Institute of Physics and Technology, Russia}, Andrey Kupavskii\footnote{Saint-Petersburg State University, Russia and G-SCOP, CNRS, Universit\'e Grenoble-Alpes, France and
Moscow Institute of Physics and Technology, Russia; Email: {\tt kupavskii@ya.ru}. Research of the author is supported by the grant RSF N 22-11-00131.}}
\newtheorem{theorem}{Theorem}[section]
\newtheorem{claim}{Proposition}[section]
\newtheorem{conjecture}{Conjecture}[section]
\theoremstyle{definition}
\numberwithin{equation}{section}
\newtheorem*{thm*}{Theorem}
\newcommand{\ff}{{\mathcal F}}
\newcommand{\aaa}{{\mathcal A}}
\newcommand{\bb}{{\mathcal B}}
\begin{document}
\maketitle

\begin{abstract}
In 1965 Erd\H os asked, what is the largest size of a family of $k$-element subsets of an $n$-element set that does not contain a matching of size $s+1$? In this note, we improve upon a recent result of Frankl and resolve this problem for \(s>101k^{3}\) and \((s+1)k\leqslant n<(s+1)(k+\frac{1}{100k})\).
\end{abstract}
\section{Introduction}
Let \(n, k\geqslant 2\) be positive integers. We denote \([n] = \{1,\ldots, n\}\) and for a set $X$ put $2^{X}$, ${X\choose k}$ to be the power set of $X$ and the set of all $k$-element subsets of $X$, respectively. A {\it family} is simply a collection of sets. For a family $\ff$, let  \(\nu(\mathcal{F})\) stand for the {\it matching number of $\ff$}, that is, the maximum number of pairwise disjoint sets from  \(\mathcal{F}\).

What is the largest family $\ff\subset {[n]\choose k}$ such that $\nu(\ff)\le s$? We assume that $n\ge (s+1)k$.  Otherwise, the question is trivial: the family ${[n]\choose k}$ itself has matching number at most $s$. Here are two natural candidate families:
\begin{equation}
    \mathcal{A}=\textstyle\binom{[(s+1)k-1]}{k},
\end{equation}
\begin{equation}
    \mathcal{B}=\left\{B\in \textstyle\binom{[n]}{k}: B\cap[s] \neq \varnothing\right\}.
\end{equation}
It is not difficult to check that indeed \(\nu(\mathcal{A})=\nu(\mathcal{B})=s\).

In 1965 Erd\H os \cite{E} studied this question and proposed the following conjecture.
\begin{conjecture}[Erd\H os~\cite{E}]
Let $n,k,s$ be positive integers and $n\ge (s+1)k$. Assume that \(\mathcal{F}\subset\binom{[n]}{k}\) satisfies \(\nu(\mathcal{F})\leqslant s\). Then
\begin{equation}\label{ineq1}
    |\mathcal{F}|\leqslant \max\left\{|\mathcal{A}|,|\mathcal{B}|\right\}.
\end{equation}
\end{conjecture}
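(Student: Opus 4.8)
In the stated range a routine computation gives $|\mathcal A|\ge|\mathcal B|$ (indeed $|\mathcal A|$ is within a $(1+o(1))$ factor of $\binom nk$ while $|\mathcal B|\approx(1-e^{-1})\binom nk$), so it suffices to show $|\mathcal F|\le\binom{(s+1)k-1}{k}$. First I would reduce to the case that $\mathcal F$ is \emph{shifted}: applying the left‑compression operators $S_{ij}$ ($i<j$) never decreases $|\mathcal F|$ and never increases $\nu(\mathcal F)$, and $\mathcal A$ is itself shifted, so we may assume $\mathcal F$ is shifted; then $\bigcup\mathcal F$ is an initial segment $[m]$ with $m\le n$. The basic structural input is the \emph{threshold lemma}: for shifted $\mathcal F$ with $\nu(\mathcal F)\le s$, every $F=\{a_1<\dots<a_k\}\in\mathcal F$ satisfies $a_i\le i(s+1)-1$ for some $i\in[k]$. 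Indeed, if $a_i\ge i(s+1)$ for all $i$, then for $r=1,\dots,s+1$ the sets $B_r=\{(j-1)(s+1)+r:j\in[k]\}$ lie below $F$ coordinatewise (as $a_j\ge j(s+1)\ge(j-1)(s+1)+r$), hence $B_r\in\mathcal F$; but $B_1,\dots,B_{s+1}$ are pairwise disjoint, contradicting $\nu(\mathcal F)\le s$.

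Next comes the dichotomy on $m$. If $m\le(s+1)k-1$ then $\mathcal F\subseteq\binom{[(s+1)k-1]}{k}$ and we are done. The boundary case $m=(s+1)k$ already exhibits the target bound: $\binom{[(s+1)k]}{k}\setminus\mathcal F$ meets every perfect matching of $[(s+1)k]$ into $k$-blocks, so for a uniformly random such matching $P$, using $\Pr[G\in P]=(s+1)/\binom{(s+1)k}{k}$ we get $1\le\mathbb E\bigl[\#\{\text{blocks of }P\text{ not in }\mathcal F\}\bigr]=\bigl(\binom{(s+1)k}{k}-|\mathcal F|\bigr)(s+1)/\binom{(s+1)k}{k}$, i.e.\ $|\mathcal F|\le\binom{(s+1)k}{k}(1-\tfrac1{s+1})=\binom{(s+1)k-1}{k}$. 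The substance is the range $(s+1)k<m\le n$. Here I would induct on $m$ down to $(s+1)k-1$, splitting $\mathcal F=\mathcal F'\sqcup\{G\cup\{m\}:G\in\mathcal F''\}$, where $\mathcal F'=\{F\in\mathcal F:m\notin F\}\subseteq\binom{[m-1]}{k}$ and the link $\mathcal F''=\{F\setminus\{m\}:m\in F\in\mathcal F\}\subseteq\binom{[m-1]}{k-1}$. Shiftedness gives the crucial extra information that $G\in\mathcal F''$ forces $G\cup\{x\}\in\mathcal F$ for \emph{every} $x\in[m]\setminus G$; combined with $m\ge(s+1)k$ this yields $\nu(\mathcal F'')\le s$ and also $\mathcal F'\supseteq\{G\cup\{x\}:G\in\mathcal F'',\,x\in[m-1]\setminus G\}$, whence $|\mathcal F'|\ge s|\mathcal F''|$ — a nonempty link forces $\mathcal F'$ to be correspondingly large. (An alternative line is to fix a maximum matching $M_1,\dots,M_s$ of $\mathcal F$ with uncovered set $U=[m]\setminus\bigcup_iM_i$, observe that the subfamily of $\mathcal F$ supported on a single $M_i$ is intersecting on the $\bigl(2k+(m-(s+1)k)\bigr)$-element set $M_i\cup U$ and, more generally, that the subfamily supported on an index set $I\subseteq[s]$ has matching number $\le|I|$, and then bound $|\mathcal F|$ by summing these local contributions using Erd\H os--Ko--Rado/Hilton--Milner and Frankl's cross-dependent-family estimates.)

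The main obstacle is the \emph{quantitative balance} once $m\ge(s+1)k$, and this is exactly where the hypotheses $s>101k^3$ and $n<(s+1)(k+\tfrac1{100k})$ are really needed. The threshold lemma by itself overshoots: it only gives $|\mathcal F|\le\binom mk-N_m$ with $N_m=|\{F\in\binom{[m]}{k}:a_i\ge i(s+1)\ \forall i\}|$, and already for $k=2$, $m=(s+1)k$ one has $N_m=s+1$ whereas reaching $\binom{(s+1)k-1}{k}$ would require $N_m\ge 2s+1$. Likewise the naive link recursion only yields $|\mathcal F|\le|\mathcal F'|(1+\tfrac1s)$, and iterating this over the window $m-(s+1)k<\tfrac{s+1}{100k}$ accumulates a multiplicative loss of about $e^{1/(100k)}>1$ — fatal, since we must land at $\binom{(s+1)k-1}{k}$ \emph{exactly}. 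So the recursion has to be made essentially lossless: one must carry a sharp deficiency invariant $D(m)$ with $D((s+1)k-1)=\binom{(s+1)k-1}{k}$ and prove $|\mathcal F|\le D(m)$ by exploiting the \emph{global} absence of an $(s+1)$-matching — not merely the set‑by‑set threshold condition — to kill the $(1+\tfrac1s)$ loss at every step. The numerical room that makes this feasible is precisely that $s$ is large relative to $k$ (so the local ground sets $M_i\cup U$ of size $2k+(m-(s+1)k)$ are tiny next to the matching number, and the intersecting/cross-dependent bounds are very wasteful to saturate) together with the narrowness of the interval for $m$. Pinning down the correct $D(m)$ and verifying that the recursions close throughout this window is what I expect to be the bulk of the proof.
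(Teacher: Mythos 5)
Your proposal attacks the problem by inducting on the largest used ground-set element $m$ together with the link decomposition $\mathcal F=\mathcal F'\sqcup\{G\cup\{m\}\}$. This is a genuinely different route from the paper, which instead follows Frankl's trace method: take the trace $\mathcal T$ of $\mathcal F$ onto $[(s+1)k-1]$, produce a distinguished $(k-1)$-set $G_0$ and disjoint $k$-sets $G_1,\dots,G_s\in\mathcal T$, assign each $T\in\mathcal T$ a weight depending on $|T|$ and its \emph{width} (how many $G_i$ it meets), and reduce $|\mathcal F|\le|\mathcal A|$ to the purely local inequality $w_{\mathcal F}(M)\le w_{\mathcal A}(M)$ for every $k$-subset $M\subset[s]$. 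Inside a fixed $M$ the proof then shows, by a sequence of cyclic-shift/rotation arguments producing $(s+1)$-matchings and by exploiting that the weight of a small set is tiny (order $\varepsilon^{k-d}/s^{d-c}$), that $\mathcal T_M$ contains no set of size $<k$ at all, which forces the inequality.

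The problem with your proposal is that it is essentially a program, not a proof. You correctly isolate the difficulty: the naive link recursion loses a factor $(1+\tfrac1s)$ at each of up to $\sim\frac{s+1}{100k}$ steps, which is fatal because the target bound must be reached exactly at $m=(s+1)k-1$. Your remedy is to ``carry a sharp deficiency invariant $D(m)$'' and ``exploit the global absence of an $(s+1)$-matching'' — but this invariant is not defined, no closed recursion for it is given, and no argument is offered that the global matching constraint actually produces a cancellation of exactly the right size. That is the entire content of the theorem, and it is precisely what the paper's width-and-weight decomposition plus the full/almost-full transversal analysis supplies. The alternative line you mention in parentheses (intersecting subfamilies on $M_i\cup U$, cross-dependent estimates) is likewise only a one-sentence sketch. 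So while the overall strategy you outline is a reasonable and different starting point, the proof has not been carried out; the step labelled ``the bulk of the proof'' in your own words is missing.

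A couple of smaller points. Your preliminary claim $|\mathcal A|\ge|\mathcal B|$ in this range is true (the paper notes $|\mathcal A|<|\mathcal B|$ only once $n\ge(s+1)(k+\tfrac12)$), but the parenthetical ``$|\mathcal A|$ is within a $(1+o(1))$ factor of $\binom nk$'' needs $s\to\infty$; that does hold here since $s>101k^3$, but it should be said. The boundary case $m=(s+1)k$ is handled cleanly and is indeed Kleitman's theorem, which the paper cites. The threshold lemma and the structural facts about $\mathcal F',\mathcal F''$ ($\nu(\mathcal F'')\le s$, $|\mathcal F'|\ge s|\mathcal F''|$) are correct for shifted families, but, as you yourself observe, they overshoot and cannot close the argument on their own.
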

This conjecture, as well as several of its variants, has been extensively studied. We refer to the works \cite{aletal, FK21, K49} for a survey of the recent developments around the EMC and its rainbow version. Importantly, a majority of the works, e.g., \cite{aletal,BDE, F4,F6, FK21, HLS, KLLM, KLchv, K49}, deal with the regimes when $\bb$ is larger than $\aaa$ (and show extremality of $\aaa$ under more or less restrictive conditions). In particular, in \cite{FK21} Frankl and the second author showed that \(|\mathcal{F}|\leqslant|\mathcal{B}|\) for all sufficiently large $s$ and \(n\geqslant\frac{5}{3}sk-\frac{2}{3}s\).

In this note, however, we are interested in the case when $\aaa$ is extremal or, put differently, the forbidden matching is almost perfect. Standard, albeit tedious, calculations show that for \(n\geqslant (s+1)(k+\frac{1}{2})\) we have
\(|\mathcal{A}| < |\mathcal{B}|\).

In 2017 Frankl \cite{F7} proved the following theorem.
\begin{theorem}[Frankl \cite{F7}]\label{fr1}
Let \(s>k\geqslant 2\) be positive integers. Assume that $n$ satisfies  \((s+1)k\leqslant n<(s+1)(k+k^{-2k-1}/2)\). Then for any  \(\mathcal{F}\subset\binom{[n]}{k}\) with \(\nu(\mathcal{F})\leqslant s\) we have \(|\mathcal{F}|\leqslant|\mathcal{A}|.\)
\end{theorem}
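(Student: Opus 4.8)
The plan is to reduce to shifted families, settle the boundary case $n=(s+1)k$ by an exact induction, and then exploit the fact that $n-(s+1)k$ is minuscule to handle the remaining range.

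\emph{Reduction to shifted families.} First I apply the shifting operators $S_{ij}$ to $\ff$ until it becomes shifted; this does not change $|\ff|$ and does not increase $\nu(\ff)$, so the hypothesis $\nu(\ff)\le s$ survives. For shifted families one has the usual push-down phenomenon: if $\nu(\ff)\ge q$, then a $q$-matching of $\ff$ minimising the sum of all its elements must occupy exactly $[qk]$, since otherwise a large element could be replaced by an unused small one, staying in $\ff$ by shiftedness and decreasing the sum. Hence for shifted $\ff$ the condition $\nu(\ff)\le s$ is equivalent to saying that $\ff\cap\binom{[(s+1)k]}{k}$ contains no perfect matching of $[(s+1)k]$. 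I also use freely that if $F\in\ff$, then every $k$-set lying coordinate-wise below $F$ belongs to $\ff$.

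\emph{The case $n=(s+1)k$.} Here the claim is exactly that a family $\h$ of $k$-subsets of an arbitrary $qk$-element set (with $q=s+1$) that has no perfect matching satisfies $|\h|\le\binom{qk-1}{k}$; I prove this by induction on $q$, the cases $q\le1$ being trivial. Make $\h$ shifted, put $m=qk$, and split $\h=\h_{\bar m}\sqcup\h_m$ according to whether $m$ lies in the set. Then $|\h_{\bar m}|\le\binom{qk-1}{k}$ trivially and $|\h_m|$ equals the size of the link $\h(m)=\{G\in\binom{[qk-1]}{k-1}:G\cup\{m\}\in\h\}$, so it suffices to show that $|\h(m)|$ is at most the number $N$ of $k$-subsets of $[qk-1]$ \emph{not} in $\h$. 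For every $G\in\h(m)$ the family $\h\cap\binom{[qk-1]\setminus G}{k}$ has no perfect matching of the $(q-1)k$-set $[qk-1]\setminus G$, for adjoining $G\cup\{m\}$ would produce a perfect matching of $[qk]$; by the inductive hypothesis at least $\binom{(q-1)k}{k}-\binom{(q-1)k-1}{k}=\binom{(q-1)k-1}{k-1}$ of the $k$-subsets of $[qk-1]\setminus G$ are absent from $\h$. Counting the pairs $(G,A)$ with $G\in\h(m)$, $A\in\binom{[qk-1]}{k}\setminus\h$, $A\cap G=\varnothing$ in two ways --- at least $\binom{(q-1)k-1}{k-1}$ per $G$, and at most $\binom{(q-1)k-1}{k-1}$ per absent $A$ (as $G$ must avoid $A$) --- the two binomials cancel and give $|\h(m)|\le N$. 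Taking $q=s+1$ yields $|\ff|\le\binom{(s+1)k-1}{k}=|\aaa|$.

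\emph{The case $n>(s+1)k$, and the main difficulty.} Let $D=\{(s+1)k,\dots,n\}$; the hypothesis on $n$ says $|D|$ is tiny. Split $\ff$ into $\ff_0=\ff\cap\binom{[(s+1)k-1]}{k}$ and the sets meeting $D$; since $|\ff_0|\le\binom{(s+1)k-1}{k}$ it suffices to bound the number of sets meeting $D$ by the number of $k$-subsets of $[(s+1)k-1]$ absent from $\ff$. Shiftedness gives a ``fan'' property: if $F\in\ff$ with $S=F\cap D\ne\varnothing$ and $R=F\setminus D\subseteq[(s+1)k-1]$, then replacing the elements of $S$ (all $\ge(s+1)k$) by arbitrary elements of $[(s+1)k-1]\setminus R$ only lowers the sorted tuple, so $R\cup X\in\ff_0$ for \emph{every} $X\in\binom{[(s+1)k-1]\setminus R}{|S|}$. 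Moreover $R\cup S\in\ff$ forces $\ff\cap\binom{[(s+1)k-1]\setminus R}{k}$ --- a family on a $(sk+|S|-1)$-set --- to have matching number at most $s-1$, so by the induction hypothesis for $s-1$ many of its $k$-subsets are absent from $\ff$. Grouping the sets of $\ff$ meeting $D$ by their trace $R$ and running a double count in the spirit of the previous paragraph then closes the argument. The crux --- and the source of the peculiar constant $k^{-2k-1}/2$ --- is that this last double count is no longer exact: the binomial ratios $\binom{sk+|S|-1}{k-1}/\binom{sk-1}{k-1}$ and the multiplicities with which a fixed absent $k$-set gets charged across the various trace sizes do not cancel as they did when $n=(s+1)k$, and a crude estimate loses a factor that is negligible only when $n-(s+1)k$ is minuscule relative to $s$. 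One has to exploit the anticorrelation between ``$\ff_0$ contains many fans'' (which forces few absent sets) and ``$\ff$ has many sets meeting $D$'', and keep the numerics tight enough to survive the claimed range of $n$; a minor additional bookkeeping point is to verify that the recursion on $s$ never leaves the admissible interval for $n$.
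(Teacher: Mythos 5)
Your argument splits into two parts, and they are of very different quality. The base case $n=(s+1)k$ is a correct and self-contained proof of Kleitman's theorem: the link/double-count on pairs $(G,A)$ with $G\in\h(m)$, $A$ absent, $A\cap G=\varnothing$ is clean, and the two sides are both exactly $\binom{(q-1)k-1}{k-1}$ per unit, so the cancellation is exact. This part stands on its own. It is, however, not the paper's argument --- the paper simply cites Kleitman for $n=(s+1)k$ and works through Frankl's width/weight/trace machinery (the set $G_0$, the families $\mathcal T_M$, the weights $w_{c,d}$) for the genuinely new range, which is quite unlike your fan-and-double-count scheme.

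The range $(s+1)k<n<(s+1)(k+k^{-2k-1}/2)$ is where the content of Theorem~\ref{fr1} lies, and here your proposal stops being a proof. The fan property (shiftedness forces $R\cup X\in\ff_0$ for every $X\in\binom{[(s+1)k-1]\setminus R}{|S|}$) and the observation that $\ff\cap\binom{[(s+1)k-1]\setminus R}{k}$ has $\nu\le s-1$ are correct, but the step ``running a double count in the spirit of the previous paragraph then closes the argument'' is precisely the part that needs to be carried out, and you acknowledge yourself that it does not close: the multiplicities with which a fixed absent set is charged now vary with the trace size $|R|\in\{k-|D|,\dots,k-1\}$, and the two sides of the count no longer cancel. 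Saying that one must ``exploit the anticorrelation \dots and keep the numerics tight enough'' is a description of the difficulty, not a resolution of it, and the specific threshold $k^{-2k-1}/2$ is never derived. There is also an unresolved bookkeeping problem you flag but do not settle: the inductive application to $\ff\cap\binom{[(s+1)k-1]\setminus R}{k}$ with parameters $(s-1, sk+|S|-1)$ requires $|S|-1<(s-1+1)k^{-2k-1}/2=sk^{-2k-1}/2$, whereas the hypothesis only gives $|S|-1\le|D|-1<(s+1)k^{-2k-1}/2$, so the recursion can leave the admissible window; and when $s$ descends to $k+1$ the hypothesis $s>k$ also becomes tight. In short, the $n=(s+1)k$ case is fine (and elegant), but for the range that actually constitutes Frankl's theorem you have a plan, not a proof; the key quantitative double count and the verification that the induction stays inside the admissible region are both missing.
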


Building upon the approach of Frankl, we can significantly improve the bounds in Theorem~\ref{fr1}. This is the main goal of this note.
\begin{theorem}\label{th2}
Let \(s>k\geqslant 5\) be positive integers, \(s>101k^3\). Assume that $n$ satisfies  \((s+1)k\leqslant n<(s+1)(k+\frac{1}{100k})\). Then for any  \(\mathcal{F}\subset\binom{[n]}{k}\) with \(\nu(\mathcal{F})\leqslant s\) we have \(|\mathcal{F}|\leqslant|\mathcal{A}|.\)
\end{theorem}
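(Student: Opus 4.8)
The plan is to follow Frankl's strategy: fix an optimal matching, sort the edges of $\mathcal{F}$ by how far they protrude from it, and balance the ``surplus'' of far-protruding edges against the ``deficit'' of edges lying inside the matching.

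\emph{Reductions and setup.} Enlarging $\mathcal{F}$ to a maximal family with $\nu\le s$ does not decrease $|\mathcal{F}|$, so assume $\mathcal{F}$ is maximal; since $\nu\bigl(\binom{[n]}{k}\bigr)=\lfloor n/k\rfloor\ge s+1$, maximality forces $\nu(\mathcal{F})=s$ (for every $G\notin\mathcal{F}$ there is an $s$-matching in $\mathcal{F}$ avoiding $G$). One may also left-compress and assume $\mathcal{F}$ is shifted, separating off the easier case where the matching number drops. Write $n=(s+1)k+d$, so $0\le d<\frac{s+1}{100k}$. Fix a maximum matching $\mathcal{M}=\{M_1,\dots,M_s\}$ and put $U=\bigcup_i M_i$, $V=[n]\setminus U$; thus $|U|=sk$ and $|V|=k+d$. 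Two facts from the maximality of $\mathcal{M}$ drive the argument: every $F\in\mathcal{F}$ meets $U$ and no $F\in\mathcal{F}$ lies in $V$; and, more generally, the \emph{exchange principle} — if $F\in\mathcal{F}$ meets exactly the blocks $\{M_i:i\in I\}$, then $\mathcal{F}$ restricted to the $(|I|k+d)$-element set $\bigl(\bigcup_{i\in I}M_i\cup V\bigr)\setminus F$ has matching number at most $|I|-1$, since otherwise $|I|-1$ disjoint witnessing edges together with $F$ and $\{M_i:i\notin I\}$ form an $(s+1)$-matching.

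\emph{Main count.} Partition $\mathcal{F}=\bigsqcup_{j=0}^{k-1}\mathcal{F}_j$ with $\mathcal{F}_j=\{F\in\mathcal{F}:|F\cap V|=j\}$ (using $|F\cap V|\le k-1$). Then $|\mathcal{F}_0|\le\binom{sk}{k}$, while by Vandermonde $|\mathcal{A}|=\binom{(s+1)k-1}{k}=\binom{sk}{k}+\sum_{j=1}^{k-1}\binom{k-1}{j}\binom{sk}{k-j}$, so it suffices to prove
\[
\sum_{j=1}^{k-1}|\mathcal{F}_j|\ \le\ \Bigl(\textstyle\binom{sk}{k}-|\mathcal{F}_0|\Bigr)+\sum_{j=1}^{k-1}\binom{k-1}{j}\binom{sk}{k-j}.
\]
For $j\ge1$ and $S\in\binom{U}{k-j}$, let $d_S=|\{T\in\binom{V}{j}:S\cup T\in\mathcal{F}\}|$, so $|\mathcal{F}_j|=\sum_{|S|=k-j}d_S$. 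Applying the exchange principle to the blocks met by $S$ shows that a large $d_S$ forces $\mathcal{F}$ to omit many $k$-subsets of $\bigcup_{i\in I}M_i\cup V$; chosen carefully, a controlled share of these omitted sets lies in $U$ and thus contributes to $\binom{sk}{k}-|\mathcal{F}_0|$, while the rest are sets counted by some $\mathcal{F}_{j'}$ that the naive bound $|\mathcal{F}_{j'}|\le\binom{k+d}{j'}\binom{sk}{k-j'}$ overcounts. Summing these local constraints with the right multiplicities yields the displayed inequality, and this is where $s>101k^3$ and $d<\frac{s+1}{100k}$ are spent.

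\emph{Main obstacle.} The heart of the proof is this quantitative accounting. Already $|\mathcal{F}_j|\le\binom{k+d}{j}\binom{sk}{k-j}$ exceeds the target $\binom{k-1}{j}\binom{sk}{k-j}$ since $k+d>k-1$, so the exchange constraints must be exploited sharply enough to recover the factor $\binom{k-1}{j}/\binom{k+d}{j}$, paid for out of the slack in $\mathcal{F}_0$. This is delicate because several blocks of $\mathcal{M}$ can take part in one exchange, so the omitted sets for different $S$ overlap and their multiplicities must be tracked; and because $d$, while small relative to $s/k$, need not be small relative to $k$, so the intuition ``$V$ is too small to host extra matchings'' must be made quantitative rather than merely invoked. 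A cruder handling of exactly this step is what limited Frankl to $d<k^{-2k-1}(s+1)/2$; a more efficient bookkeeping of which $k$-subsets of $U$ are omitted, and how often, gives the improvement to $d<(s+1)/(100k)$.
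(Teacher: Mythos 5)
Your write-up is a plan, not a proof. The decisive step — the inequality
\[
\sum_{j=1}^{k-1}|\mathcal{F}_j|\ \le\ \Bigl(\textstyle\binom{sk}{k}-|\mathcal{F}_0|\Bigr)+\sum_{j=1}^{k-1}\binom{k-1}{j}\binom{sk}{k-j}
\]
— is never established. Your ``main count'' paragraph states that ``applying the exchange principle \ldots shows that a large $d_S$ forces $\mathcal{F}$ to omit many $k$-subsets,'' that ``a controlled share of these omitted sets lies in $U$,'' and that ``summing these local constraints with the right multiplicities yields the displayed inequality,'' but none of these is carried out: no multiplicities are computed, no double counting is performed, and no inequality is derived. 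The ``main obstacle'' paragraph then candidly names precisely this step as the crux and says ``a more efficient bookkeeping \ldots gives the improvement'' without doing the bookkeeping. In other words, the only content supplied is the trivial reduction $|\mathcal{F}_0|\le\binom{sk}{k}$ and the Vandermonde identity for $|\mathcal{A}|$; everything that makes the theorem true, and everything that makes it hard, is left as an acknowledged gap.

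Beyond being incomplete, the skeleton you propose differs structurally from what actually closes the argument. The paper does not partition $\mathcal{F}$ by $|F\cap V|$ relative to one fixed matching and then balance deficits against surpluses globally. Instead it passes to the trace $\mathcal{T}$ on $[(s+1)k-1]$, invokes Frankl's Proposition~\ref{spcl} to obtain a distinguished $(k-1)$-set $G_0$ with a forced-extension property, and then averages over all $M\in\binom{[s]}{k}$ via the width/weight identity~(\ref{eqFF}), reducing to the \emph{local} inequality $w_{\mathcal{F}}(M)\le w_{\mathcal{A}}(M)$. The decisive extremal input is then a sequence of structural claims (Propositions~\ref{clTT}, \ref{clRX}, and the final claim) showing via explicit cyclic-shift constructions of pairwise disjoint transversals that if $\mathcal{T}_M$ contains any set of size $<k$ one can produce $s+1$ pairwise disjoint members of $\mathcal{T}$, contradicting $\nu(\mathcal{T})=s$. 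The role of the hypotheses $s>101k^3$ and $\varepsilon=1/100k$ is to make the weight bounds $W_g\le(1+\tfrac{2}{k})\tfrac{k^{k+2g}}{s^g g!}\varepsilon$ small enough that, under $w_{\mathcal{F}}(M)\ge w_{\mathcal{A}}(M)$, almost all full and almost-full transversals must lie in $\mathcal{T}_M$ — which is what makes the cyclic-shift constructions land inside $\mathcal{T}_M$. Your exchange principle is the right kind of statement, but it is used at the level of a single matching, whereas the working proof needs the $G_0$/trace/weight machinery to localize to $G(M)$ and to quantify ``almost all transversals present.'' Without an explicit replacement for that machinery — concrete constraints from each exchange, a count of how often each omitted set is produced, and the resulting inequality — the proposal does not prove the theorem.
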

One can note that, while the bound on $n$ is significantly weakened, we have to impose a bound on $s$. This was not necessary in Theorem~\ref{fr1} since for  \(s<2k^{2k+1}\) the theorem is trivial: the inequalities on $n$ amount to \(n=(s+1)k\), and in this case the inequality (\ref{ineq1}) is easy and was proved by Kleitman in \cite{Kl}.

We note that a very interesting question in this direction is to replace $1/100k$ by some small constant.

In the proof of Theorem~\ref{th2} we shall follow the proof of  \cite{F7}, and, in particular, shall use much of the same notation and definitions. 
\section{Definitions and set-up}
This section introduces the framework of Frankl from \cite{F7}. The statements given without a proof, if non-obvious, are proved in \cite{F7}.

In what follows, we suppose that $\ff$ is the largest family satisfying the restrictions in Theorem~\ref{th2}.
\subsection{Shifting and Trace}
Take two $k$-sets $F=\{a_1,a_2,\ldots,a_k\}$, $a_1<a_2<\ldots<a_k$, and $G=\{b_1,b_2,\ldots,b_k\}$, $b_1<b_2<\ldots<b_k$. We say that $F$ {\it precedes} $G$, or \(F\preceq G\), if \(a_i\leqslant b_i\) for each \(i\in[k]\). If for any \(F, G\) such that \(F\prec G\) and \(G\in \mathcal{F}\) we have that \(F\in \mathcal{F}\), then we say that the family \(\mathcal{F}\) is \textit{shifted}.

It is standard (cf., e.g., \cite{Fra3}) that we can w.l.o.g. assume that $\ff$ is shifted. We assume this in what follows.

For a family \(\mathcal{F}\) define its \textit{trace} on \([(s+1)k-1]\) as follows:\begin{equation}
    \mathcal{T}=\mathcal{T(F)}=\{T\cap[(s+1)k-1]: T\in\mathcal{F}\}.
\end{equation}
It is not difficult to derive from shiftedness of $\ff$ that \(\nu(\mathcal{T})=s\). By maximality of $\ff$, we may suppose that \(\mathcal{F}=\{F\in\binom{[n]}{k}: \exists T\in\mathcal{T}, T\subset F\}\). The latter allows us to write
\begin{equation}\label{eqF}
    |\mathcal{F}|=\sum\limits_{d=1}^k\left|\{T\in \mathcal{T}: |T|=d\}\right|\cdot\textstyle\binom{\bar n}{k-d},\
\end{equation} where \(\bar n = n-(s+1)k+1\).

In \cite{F7} Frankl proves the following claim.
\begin{claim}\label{spcl}
There exists a set \(G_0\subset[(s+1)k-1]\) of size \(k-1\), \(G_0\notin\mathcal{T}\), such that for any \(B\in\mathcal{F}\), \(B\cap G_0=\varnothing\), we have
\begin{equation}
    G_0\cup\{b\}\in\mathcal{F},
\end{equation}
where \(b\) is the minimal element of \(B\).
\end{claim}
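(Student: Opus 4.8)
The plan is to exploit shiftedness together with the fact that $\nu(\mathcal T)=s$ but $\mathcal F$ is the largest family, which forces $\mathcal T$ to be "saturated" in a strong sense. First I would record the basic consequence of shiftedness: since $\ff$ is shifted and $\nu(\ff)\le s$, there must exist a $k$-set in $\binom{[(s+1)k-1]}{k}$ that does \emph{not} lie in $\ff$ — indeed, if $\binom{[(s+1)k-1]}{k}\subseteq\ff$ then already $\ff$ contains $s$ disjoint $k$-sets inside $[(s+1)k-1]$ \emph{and}, by shiftedness applied to any $k$-set meeting $\{(s+1)k,\dots,n\}$, one could assemble an $(s+1)$-st disjoint set, contradicting $\nu(\ff)\le s$. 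So fix a set $M\in\binom{[(s+1)k-1]}{k}\setminus\ff$; by shifting we may take $M$ to be the $\preceq$-largest such set, i.e. $M$ is an "up-set generator" of the complement. The candidate for $G_0$ will be obtained from $M$ by deleting its maximal element.

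Next I would argue that $G_0:=M\setminus\{\max M\}$ works. Write $m=\max M$, so $G_0\cup\{m\}=M\notin\ff$, and in particular $G_0\notin\mathcal T$ (since $|G_0|=k-1<k$ and any trace set of size $k-1$ would arise from a $k$-set of $\ff$ containing it, which by $\mathcal F=\{F:\exists T\in\mathcal T,\ T\subseteq F\}$ would put $G_0\cup\{x\}\in\ff$ for all $x$, contradicting $M\notin\ff$ once we check $m$ is among the available extensions — this needs a small argument to rule out the degenerate possibility, handled via shiftedness and $m\le (s+1)k-1$). Now take any $B\in\ff$ with $B\cap G_0=\varnothing$ and let $b=\min B$. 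I want $G_0\cup\{b\}\in\ff$. Consider the $k$-set $G_0\cup\{b\}$: if $b\le m$ then $G_0\cup\{b\}\preceq G_0\cup\{m\}=M$, and if $M\notin\ff$ shiftedness would \emph{not} directly give $G_0\cup\{b\}\in\ff$ — so the real content is that we must instead produce a matching. The key move: because $B\in\ff$ and $B\cap G_0=\varnothing$, and because $G_0$ has only $k-1$ elements, the set $G_0\cup\{b\}$ together with $s-1$ further disjoint $k$-sets avoiding $G_0\cup B$ (which exist inside $[(s+1)k-1]$ by a counting/shifting argument, using $\nu$-extremality of $\mathcal T$) plus $B$ itself would, if $G_0\cup\{b\}\notin\ff$, still be fine — so the contradiction must come from comparing sizes: if $G_0\cup\{b\}\notin\ff$ then replacing in $\mathcal T$ allows enlarging $\ff$ while keeping $\nu\le s$, contradicting maximality.

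The cleanest route, which I would pursue, is the maximality argument: suppose $G_0\cup\{b\}\notin\ff$ for some $B\in\ff$ with $b=\min B$, $B\cap G_0=\varnothing$. Form $\ff' = \ff \cup \{G_0\cup\{b\}\}$ (and close upward, i.e. add all $k$-sets containing traces, but here we only add one trace set of size $k-1<k$ so this genuinely enlarges $\ff$ via \eqref{eqF}). I must show $\nu(\ff')\le s$. Any matching in $\ff'$ using the new set $G_0\cup\{b\}$ consists of $G_0\cup\{b\}$ plus disjoint sets $F_1,\dots,F_t\in\ff$ with $t\le s$; I need $t\le s-1$. Here is where $B$ enters: from $F_1,\dots,F_t$ disjoint from $G_0\cup\{b\}$ I would build a matching of size $t+1$ inside $\ff$ by swapping in $B$ — since $B\cap G_0=\varnothing$ and $b=\min B\notin F_1\cup\dots\cup F_t$ (as $b\in G_0\cup\{b\}$), but $B$ might intersect some $F_i$; this is exactly the point where shiftedness is used, to "shift $B$ down" off the $F_i$ or to relocate the offending $F_i$, yielding a matching of size $t+1$ in $\ff$, so $t+1\le s$. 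This contradicts maximality of $\ff$, proving $G_0\cup\{b\}\in\ff$, which together with the earlier observations that $G_0\notin\mathcal T$ and $|G_0|=k-1$ completes the proof.

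\textbf{Main obstacle.} The delicate step is the swap "$B$ in place of $G_0\cup\{b\}$": controlling the interaction of $B$ with the arbitrary matching $F_1,\dots,F_t$ using only shiftedness and the minimality $b=\min B$. One expects to either invoke a standard lemma (Frankl's shifting lemma that a shifted family with an "independent" matching structure admits such swaps) or to argue by choosing $F_1,\dots,F_t$ to be $\preceq$-minimal and then use that $b$ being the minimum of $B$ prevents $B$ from needing elements smaller than those already used. Everything else — the existence of $M$, the identification $G_0 = M\setminus\{\max M\}$, and the size bookkeeping via \eqref{eqF} — is routine.
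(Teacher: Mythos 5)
Your high-level strategy (pick a $\preceq$-extremal $k$-set $M$ missing from $\mathcal{F}$, set $G_0 = M\setminus\{\max M\}$, and then use maximality of $\mathcal F$ to force $G_0\cup\{b\}\in\mathcal F$) is plausible in outline, but the central step is asserted rather than proved, and as written it does not go through.

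The gap is exactly the one you flag as the ``main obstacle'': from a hypothetical $(s+1)$-matching $G_0\cup\{b\},F_1,\dots,F_s$ in $\mathcal F'=\mathcal F\cup\{G_0\cup\{b\}\}$, you want an $(s+1)$-matching entirely inside $\mathcal F$ by replacing $G_0\cup\{b\}$ with (a shift of) $B$. The only element of $B$ you know lies outside $F_1\cup\dots\cup F_s$ is $b$; the remaining $k-1$ elements of $B$ could all lie inside $F_1\cup\dots\cup F_s$, and then a $B'\preceq B$ disjoint from $F_1\cup\dots\cup F_s$ must have $B'\setminus\{b\}\subseteq G_0\cup\bigl([n]\setminus(F_1\cup\dots\cup F_s\cup G_0\cup\{b\})\bigr)$. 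There is no reason that such a $B'\preceq B$ exists: if the elements of $G_0$ are large compared with the corresponding order statistics of $B$, the replacement increases some coordinate and shiftedness does not apply. In fact the only obvious candidate $B'$ disjoint from the $F_i$ when $n=(s+1)k$ is $G_0\cup\{b\}$ itself, which you have assumed is \emph{not} in $\mathcal F$, so the swap is circular. Invoking ``a standard shifting lemma'' or ``choose $F_1,\dots,F_t$ $\preceq$-minimal'' does not resolve this; you would need to prove a concrete statement about when such replacements are legal, and that statement is precisely the content of the proposition.

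There are also smaller issues in the setup. First, ``take $M$ to be the $\preceq$-largest $k$-set not in $\mathcal F$'' is not well-posed: $\preceq$ is only a partial order, and for a shifted $\mathcal F$ the complement (within a size class) is an up-set, whose natural generators are its $\preceq$-\emph{minimal} elements, not maximal ones; the $\preceq$-maximal $k$-set in $[(s+1)k-1]$ is always $\{sk,\dots,(s+1)k-1\}$, which gives a useless $G_0$. Second, your argument that $\binom{[(s+1)k-1]}{k}\not\subseteq\mathcal F$ assumes $\mathcal F$ contains a set meeting $\{(s+1)k,\dots,n\}$, which fails when $\mathcal F=\mathcal A$; that case must be split off separately (it is easy, but it is not covered). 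The paper does not reprove this proposition — it cites Frankl \cite{F7} — and Frankl's argument there is a more delicate structural analysis of the trace family $\mathcal T$, not a one-line maximality swap, so the piece you are missing is genuinely the hard part.
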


Shiftedness and maximality of $\ff$ imply that there exist pairwise disjoint sets  \(G_1,\ldots,G_s\in\mathcal{T}\) of size \(k\) such that \(G_i\cap G_0 =\varnothing\) for \(i=1,\ldots,s\). In what follows, we fix \(G_0,\ldots,G_s\).
\subsection{Width and weight}
For a non-empty set \(T\subset [(s+1)k-1]\) we define its {\it width} as follows:
\begin{equation}
    v(T)=|\{i\in[s]: T\cap G_i \neq \varnothing\}|.
\end{equation}
It is easy to see that \(v(T)\neq 0\) for \(T\in\mathcal{T}\).

For a non-empty set $T\subset [(s+1)k-1]$ of width $c$ and size $d$ we define its {\it weight} as follows:
\begin{equation}
    w(T)=w_{c,d}=\frac{\binom{\bar n}{k-d}}{\binom{s-c}{k-c}}.
\end{equation}

For \(M=\{m_1,\ldots,m_k\}\subset[s]\) consider
\begin{equation}
    G(M)=G_0\cup G_{m_1}\cup\ldots\cup G_{m_k},
\end{equation}

Denote by \(\mathcal{T}_M\) the family of all sets \(T\) such that \(T\in\mathcal{T}, T\subset G(M)\). An easy double-counting argument shows that  (\ref{eqF}) can be rewritten as follows.
\begin{equation}\label{eqFF}
    |\mathcal{F}|=\sum\limits_{M\in\binom{[s]}{k}} \sum\limits_{T\in\mathcal{T}_M}w(T).
\end{equation}
Denote
\begin{equation}
    w(M)=w_{\mathcal{F}}(M)=\sum\limits_{T\in\mathcal{T}_M(\mathcal{F})}w(T).
\end{equation}

The family \(\mathcal{A}\) also contains the sets  \(G_1,G_2,\ldots,G_s\). Thus, we can define the width and weight for the sets from \(\mathcal{A}\) analogously. We get an equation, analogous to  (\ref{eqFF}):
\begin{equation}\label{eqA}
    |\mathcal{A}|=\sum\limits_{M\in\binom{[s]}{k}}w_{\mathcal{A}}(M)=\sum\limits_{M\in\binom{[s]}{k}}\sum\limits_{\substack{|T|=k\\ T\subset G(M)}}w(T).
\end{equation}

Looking at equalities (\ref{eqFF}) and (\ref{eqA}), it should be clear that in order to prove \(|\mathcal{F}|\leqslant|\mathcal{A}|\), it is sufficient to establish the following inequality for each  \(M\in\binom{[s]}{k}\):
\begin{equation}\label{ineq4}
    w_{\mathcal{F}}(M)\leqslant w_{\mathcal{A}}(M).
\end{equation}
This is what we are going to do in the next section. In what follows, we fix the choice of  \(M\in\binom{[s]}{k}\) and work only with the sets that belong to  \(G(M)\). Assume that \(M=\{m_1,\ldots,m_k\}\). To simplify notation in what follows, we put \(B_i=G_{m_i}\) for \(i=1,\ldots,k\).

For \(1\leqslant c\leqslant d\leqslant k\) put
\begin{equation}
    N_{c,d}=\left|\{T\in\mathcal{T}_M:v(T)=c,|T|=d\}\right|.
\end{equation}

\section{Proof of Theorem~\ref{th2}}
Recall the set-up from the end of the previous section.
\subsection{Bounding the sum of weights}
Put \(\varepsilon=\frac{1}{100k}\) and recall that $\bar n<\varepsilon (s+1)+1$. In this subsection, we give most of the necessary calculations for the remainder of the proof. We start by bounding the weight of a set \(T\) of width \(c\) and cardinality \(d\).
\begin{claim}\label{clWeight}
\begin{equation}
    w_{c,d}\leqslant\left(1+\frac{1}{k}\right)\dfrac{\varepsilon^{k-d}}{s^{d-c}}\frac{(k-c)!}{(k-d)!}.
\end{equation}
\end{claim}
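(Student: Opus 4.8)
The plan is to unwind the definition
\[
w_{c,d}=\frac{\binom{\bar n}{k-d}}{\binom{s-c}{k-c}}
\]
and estimate the numerator and denominator separately. For the numerator, I would use $\bar n<\varepsilon(s+1)+1$; since $k-d\leqslant k-c\leqslant k-1$ and $s>101k^3$ is huge compared to $k$, the quantity $\bar n$ is (up to lower order terms) at most $\varepsilon s$, so that $\binom{\bar n}{k-d}\leqslant \frac{\bar n^{k-d}}{(k-d)!}$ and $\bar n^{k-d}\leqslant \bigl(\varepsilon(s+1)+1\bigr)^{k-d}$. I would then factor out $(\varepsilon(s+1))^{k-d}$ and absorb the additive slack $+1$ into a factor like $(1+\tfrac{1}{k})^{1/2}$ or similar, using that $\varepsilon(s+1)=\frac{s+1}{100k}$ is much larger than $1$ (indeed at least $k^2$), so $\bigl(1+\tfrac{1}{\varepsilon(s+1)}\bigr)^{k-d}$ is tiny. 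Comparing $(\varepsilon(s+1))^{k-d}$ with $\varepsilon^{k-d}s^{k-d}$ costs another factor $(1+\tfrac1s)^{k-d}$, again negligible.

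For the denominator I would bound $\binom{s-c}{k-c}$ from below. Writing $\binom{s-c}{k-c}=\frac{(s-c)(s-c-1)\cdots(s-k+1)}{(k-c)!}$, each of the $k-c$ factors in the numerator is at least $s-k$, so $\binom{s-c}{k-c}\geqslant \frac{(s-k)^{k-c}}{(k-c)!}$. I want this to be at least $\frac{s^{k-c}}{(k-c)!}$ divided by a factor close to $1$; since $(s-k)^{k-c}=s^{k-c}(1-\tfrac ks)^{k-c}$ and $\tfrac{k(k-c)}{s}\leqslant \tfrac{k^2}{s}\leqslant\tfrac1{101k}$ is tiny, we have $(1-\tfrac ks)^{k-c}\geqslant 1-\tfrac{k^2}{s}\geqslant (1+\tfrac1k)^{-1/2}$ comfortably. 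Dividing the numerator bound by the denominator bound then gives
\[
w_{c,d}\leqslant (\text{small factors})\cdot\frac{\varepsilon^{k-d}s^{k-d}}{(k-d)!}\cdot\frac{(k-c)!}{s^{k-c}}
=(\text{small factors})\cdot\frac{\varepsilon^{k-d}}{s^{d-c}}\frac{(k-c)!}{(k-d)!},
\]
using $s^{k-d}/s^{k-c}=s^{-(d-c)}$.

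The only real work is collecting all the multiplicative error factors — one from dropping $+1$ inside $\binom{\bar n}{k-d}$, one from $(\varepsilon(s+1))^{k-d}$ versus $\varepsilon^{k-d}s^{k-d}$, and one from $(s-k)^{k-c}$ versus $s^{k-c}$ — and checking that their product is at most $1+\tfrac1k$. Each factor is of the form $(1+O(1/s))^{O(k)}$ or $(1+\varepsilon^{-1}s^{-1})^{O(k)}$, and since $s>101k^3$ forces both $k/s$ and $1/(\varepsilon(s+1))=100k/(s+1)$ to be at most roughly $1/(100k^2)$, each factor is at most $1+\tfrac{1}{10k}$ or better; a crude bound like $(1+\tfrac{1}{10k})^3\leqslant 1+\tfrac1k$ for $k\geqslant 5$ closes it. I expect the main obstacle, such as it is, to be purely bookkeeping: being careful about whether $k-c$ or $k-d$ appears in each exponent so that the error bounds are taken with respect to the largest relevant exponent (which is $\leqslant k-1$), and making sure the ``$+1$'' in $\bar n<\varepsilon(s+1)+1$ is handled before exponentiating rather than after. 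No genuinely hard inequality arises; everything reduces to the smallness of $k/s$ guaranteed by the hypothesis $s>101k^3$.
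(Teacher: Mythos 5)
Your high-level plan matches the paper's proof exactly: bound $\binom{\bar n}{k-d}\leqslant \bar n^{k-d}/(k-d)!$ with $\bar n<\varepsilon(s+1)+1$, lower-bound $\binom{s-c}{k-c}\geqslant (s-k+1)^{k-c}/(k-c)!$, take the quotient, and show that the residual multiplicative error is at most $1+\tfrac1k$. The issue is in the last, purely arithmetic step, and it matters because the inequality is actually tight.

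You assert that $1/(\varepsilon(s+1))=100k/(s+1)$ is ``at most roughly $1/(100k^2)$,'' but with $s>101k^3$ and $\varepsilon=1/(100k)$ this quantity is only bounded by $100/(101k^2)\approx 1/k^2$ --- you dropped the factor $100$ coming from $\varepsilon$. Consequently the dominant error factor $\bigl(1+\tfrac{100k+1}{s}\bigr)^{k-1}$ is not $\leqslant 1+\tfrac{1}{10k}$: it can be as large as roughly $1+\tfrac{100(k-1)}{101k^2}$, which already nearly exhausts the entire budget $1+\tfrac1k$ (for $k=5$ and $s$ near $101k^3$ this factor alone is about $1.17$, while $1+\tfrac{1}{10k}=1.02$). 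Your closing estimate $(1+\tfrac{1}{10k})^3\leqslant 1+\tfrac1k$ is therefore not the inequality you need. The hypotheses $\varepsilon=\tfrac{1}{100k}$ and $s>101k^3$ are calibrated so that the total error is barely within $1+\tfrac1k$, and you have to track the linear terms carefully rather than treat each factor as negligible. Concretely, one should combine the numerator and denominator errors as $\bigl(1+\tfrac{\varepsilon+1}{\varepsilon s}\bigr)^{k-1}\big/\bigl(1-\tfrac{k-1}{s}\bigr)^{k-1}$, apply Bernoulli to get a lower bound of $1-\tfrac{(k-1)^2}{s}-\tfrac{(k-1)(100k+1)}{s}=1-\tfrac{101k(k-1)}{s}$ on the reciprocal, and note that $s>101k^3>101k(k-1)(k+1)$ gives exactly $1-\tfrac{1}{k+1}=\tfrac{1}{1+1/k}$; your looser bookkeeping does not reach this.
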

\begin{proof}
\begin{multline*}
    w_{c,d}=\frac{\binom{\bar n}{k-d}}{\binom{s-c}{k-c}}\leqslant\frac{(\varepsilon s)^{k-d}(k-c)!}{s^{k-c}(k-d)!}\cdot\frac{\left(1+\frac{\varepsilon+1}{\varepsilon s}\right)^{k-d}}{\left(1-\frac{k-1}{s}\right)^{k-c}}\leqslant\left(1+\frac{1}{k}\right)\dfrac{\varepsilon^{k-d}}{s^{d-c}}\frac{(k-c)!}{(k-d)!}.
\end{multline*}
The last inequality is implied by the following chain of inequalities:
\begin{multline*}
    \frac{\left(1-\frac{k-1}{s}\right)^{k-1}}{\left(1+\frac{\varepsilon+1}{\varepsilon s}\right)^{k-1}}\geqslant\left(1-\frac{(k-1)^2}{s}\right)\left(1-\frac{(k-1)(\varepsilon+1)}{\varepsilon s}\right)\geqslant\\
    \geqslant1-\frac{(k-1)^2}{s}-\frac{(k-1)(100k+1)}{s}\geqslant1-\frac{1}{k+1}=\frac{1}{1+\frac{1}{k}}.
    \qedhere
\end{multline*}
\end{proof}

\begin{claim}
\begin{equation}
    N_{c,d}\leqslant\binom{k}{c}\frac{k^{2d-c}}{(d-c)!}.
\end{equation}
\end{claim}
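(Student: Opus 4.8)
The plan is to bound $N_{c,d}$ by crude counting, discarding the requirement $T\in\mathcal{T}$: since we only need an upper bound, it suffices to count \emph{all} $d$-element subsets $T\subset G(M)$ of width $c$. The first observation is that for $T\subset G(M)$ the width $v(T)$ equals the number of blocks among $B_1,\dots,B_k$ met by $T$; this is because $G_0,B_1,\dots,B_k$ are pairwise disjoint and $G_0\cap G_i=\varnothing$ for every $i\in[s]$, so $T$ meets no $G_i$ with $i\notin M$. Consequently there are $\binom{k}{c}$ ways to pick the $c$-subset of indices $\{j_1,\dots,j_c\}\subset[k]$ of the blocks $T$ is required to intersect (and $T$ must avoid the remaining $k-c$ blocks entirely).

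Fixing such indices, I would then estimate the number of admissible $d$-sets $T$ contained in $H:=G_0\cup B_{j_1}\cup\dots\cup B_{j_c}$ with $T\cap B_{j_l}\neq\varnothing$ for every $l$. The device is to record a representative $r_l:=\min(T\cap B_{j_l})$ from each block; then $T$ is reconstructed from the tuple $(r_1,\dots,r_c)$ together with $S:=T\setminus\{r_1,\dots,r_c\}$, so the map $T\mapsto(r_1,\dots,r_c,S)$ is injective. Each $r_l$ ranges over the $k$-set $B_{j_l}$, giving at most $k^c$ choices, while $S$ is an arbitrary $(d-c)$-subset of $H\setminus\{r_1,\dots,r_c\}$, a set of size $(k-1)+ck-c=(c+1)(k-1)$. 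Since $c\le k$ we have $(c+1)(k-1)\le k^2-1<k^2$, so the number of choices of $S$ is at most $\binom{k^2}{d-c}\le k^{2(d-c)}/(d-c)!$. Hence for fixed indices the number of admissible $T$ is at most $k^c\cdot k^{2(d-c)}/(d-c)!=k^{2d-c}/(d-c)!$, and multiplying by the $\binom{k}{c}$ choices of indices gives the claim.

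There is no genuine difficulty here; the only two points deserving care are: (i) verifying that $T\mapsto(r_1,\dots,r_c,S)$ is truly injective, so that a set $T$ with several elements in some block is not undercounted — this is immediate, since $S$ is defined as $T$ with the representatives removed; and (ii) bounding $|H\setminus\{r_1,\dots,r_c\}|$ by $k^2$, which is exactly where $c\le k$ (equivalently $c\le d\le k$) enters and which forces the exponent of $k$ to come out to precisely $2d-c$ rather than something larger.
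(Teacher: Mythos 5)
Your proof is correct. In substance it is the same count as the paper's: you both arrive at the intermediate bound $\binom{k}{c}\,k^c\,\binom{(c+1)(k-1)}{d-c}$ and then relax $(c+1)(k-1)<k^2$. The paper gets there by summing $\binom{k-1}{a_0}\binom{k}{a_1+1}\cdots\binom{k}{a_c+1}$ over compositions $a_0+\cdots+a_c=d-c$, bounding each $\binom{k}{a_i+1}\leqslant k\binom{k-1}{a_i}$ and applying Vandermonde; your injective encoding $T\mapsto(r_1,\dots,r_c,S)$, with $r_l=\min(T\cap B_{j_l})$ and $S=T\setminus\{r_1,\dots,r_c\}$, produces the same $k^c\binom{(c+1)(k-1)}{d-c}$ directly without the Vandermonde step, which is a modest streamlining of the same idea. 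Both correctly discard the condition $T\in\mathcal{T}$ and bound all $d$-sets of width $c$ inside $G(M)$. One small point worth stating explicitly when you write this up: injectivity of your map requires the $r_l$ to be pairwise distinct, which holds because the blocks $B_{j_1},\dots,B_{j_c}$ are pairwise disjoint, so $S$ indeed has exactly $d-c$ elements.
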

\begin{proof}
For each set \(T\in\mathcal{T}_M\) that intersects exactly \(c\) sets out of \(B_1,\ldots,B_k\) there is a subset $\{W_1,\ldots, W_c\}\subset \{B_1,\ldots, B_k\}$ and non-negative integers $a_0,a_1,\ldots, a_c$ such that $T$ intersects $W_i$ in $a_i+1$ elements and $G_0$ in $a_0$ elements, moreover, we have  \(a_0+\ldots+a_c=d-c\). We can choose $W_1,\ldots, W_c$ in \(\binom{k}{c}\) different ways. Thus
\begin{align*}
    &N_{c,d}\leqslant\binom{k}{c}\sum\limits_{\substack{a_0,\ldots,a_c:\ a_i\geqslant 0, \\a_0+\ldots+a_c=d-c}}\binom{k-1}{a_0}\binom{k}{a_1+1}\ldots\binom{k}{a_c+1}\leqslant\\
    &\leqslant\binom{k}{c}\sum\limits_{a_0+\ldots+a_c=d-c}k^c\binom{k-1}{a_0}\binom{k-1}{a_1}\ldots\binom{k-1}{a_c}=\\
    &=k^c\binom{k}{c}\binom{(k-1)(c+1)}{d-c}\leqslant k^c\binom{k}{c}{k^2\choose d-c}\leqslant \binom{k}{c}\frac{k^{2d-c}}{(d-c)!}.
    \qedhere
\end{align*}
\end{proof}

For a non-negative integer $g$, denote by \(W_g\) the sum of weights of the sets  \(T\in\mathcal{T}_M\) such that \(|T|<k\) and  \(|T|-v(T)\geqslant g\). Note that  $W_0$ is the total weight of sets in $G(M)$ that lie in $\mathcal F\setminus \mathcal A$. As $g$ grows, their structure is harder to control, but, luckily, their contribution decays exponentially (in our regime). 
\begin{claim}\label{weightE}
For each \(g=0,\ldots,k-2\) we have
\begin{equation}\label{ineq33}
    W_g\leqslant\left(1+\frac{2}{k}\right)\frac{k^{k+2g}}{s^g g!}\varepsilon.
\end{equation}
\begin{proof}
Denote \(U_g=\left(1+\frac{1}{k}\right)\frac{k^{k+2g}}{s^g g!}\varepsilon\). Then for \(d=c+g,\ldots,k-1\) we have
\begin{multline*}
    w_{c,d}N_{c,d}\leqslant\left(1+\frac{1}{k}\right) \frac{\varepsilon^{k-d}}{s^{d-c}}\frac{(k-c)!}{(k-d)!}\binom{k}{k-c}\frac{k^{2d-c}}{(d-c)!}\leqslant\\
    \leqslant\left(1+\frac{1}{k}\right)\frac{\varepsilon^{k-d}}{s^{d-c}}k^{k-c}\frac{k^{2d-c}}{g!}=U_g\cdot\varepsilon^{k-d-1}\left(\frac{k^2}{s}\right)^{d-c-g}.
\end{multline*}
Here we used that \(\binom{k}{c}=\binom{k}{k-c}\leqslant\frac{k^{k-c}}{(k-c)!}\).

Next, we can bound \(W_g\) as follows:
\begin{multline*}
    W_g=\sum\limits_{c=1}^{k-g-1}\sum\limits_{d=c+g}^{k-1}w_{c,d}N_{c,d}\leqslant U_g\sum\limits_{c=1}^{k-g-1}\sum\limits_{d=c+g}^{k-1}\varepsilon^{k-d-1}\left(\frac{k^2}{s}\right)^{d-c-g}\leqslant\\
    \leqslant U_g\left(\sum\limits_{i=0}^{\infty}\varepsilon^i\right)\left(\sum\limits_{j=0}^{\infty} \left(\frac{k^2}{s}\right)^j\right)=U_g\frac{1}{1-\varepsilon}\frac{1}{1-\frac{k^2}{s}}.
\end{multline*}
It remains to note that  \(\left(1+\frac{1}{k}\right)\left(1-\frac{1}{100k}\right)^{-1} \left(1-\frac{1}{101k}\right)^{-1} <1+\frac{2}{k} \), and so the inequality (\ref{ineq33}) is proved.
\end{proof}
\end{claim}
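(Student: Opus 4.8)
The plan is to bound $W_g$ termwise over the pairs $(c,d)$ that can occur. By definition $W_g=\sum_{c=1}^{k-g-1}\sum_{d=c+g}^{k-1}w_{c,d}N_{c,d}$, i.e. the total weight of width-$c$, size-$d$ members of $\mathcal T_M$, summed over the range where $|T|-v(T)=d-c\ge g$ and $|T|=d<k$. So it suffices to bound each product $w_{c,d}N_{c,d}$ and then sum what will turn out to be (essentially) a geometric series.

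First I would multiply the two preceding Propositions. Proposition~\ref{clWeight} gives $w_{c,d}\le\big(1+\tfrac1k\big)\frac{\varepsilon^{k-d}}{s^{d-c}}\frac{(k-c)!}{(k-d)!}$, and the bound on $N_{c,d}$ gives $N_{c,d}\le\binom kc\frac{k^{2d-c}}{(d-c)!}$. The factorials collapse pleasantly: $\binom kc=\binom k{k-c}\le\frac{k^{k-c}}{(k-c)!}$ cancels the $(k-c)!$, while $(k-d)!\ge1$ (since $d\le k-1$) and $(d-c)!\ge g!$ (since $d-c\ge g$) allow me to discard those denominators. This yields $w_{c,d}N_{c,d}\le\big(1+\tfrac1k\big)\frac{\varepsilon^{k-d}}{s^{d-c}}\frac{k^{k+2d-2c}}{g!}$, which is exactly $U_g\,\varepsilon^{\,k-d-1}(k^2/s)^{\,d-c-g}$ for $U_g:=\big(1+\tfrac1k\big)\frac{k^{k+2g}}{s^{g}g!}\varepsilon$. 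The point of this rewriting is that throughout the summation range the exponents $k-d-1$ and $d-c-g$ are nonnegative.

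Then I would sum: re-indexing by $i=(k-1)-d\ge0$ and $j=(d-c)-g\ge0$, the double sum $\sum_{c,d}\varepsilon^{k-d-1}(k^2/s)^{d-c-g}$ is dominated by $\big(\sum_{i\ge0}\varepsilon^i\big)\big(\sum_{j\ge0}(k^2/s)^j\big)=\frac1{1-\varepsilon}\cdot\frac1{1-k^2/s}$, which converges since $\varepsilon=\frac1{100k}<1$ and, because $s>101k^3$, we have $k^2/s<\frac1{101k}<1$. Hence $W_g\le U_g\cdot\frac1{(1-\varepsilon)(1-k^2/s)}$.

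It remains to absorb the constant into $1+\frac2k$: using $\varepsilon=\frac1{100k}$ and $k^2/s<\frac1{101k}$ it is enough that $\big(1+\tfrac1k\big)\big(1-\tfrac1{100k}\big)^{-1}\big(1-\tfrac1{101k}\big)^{-1}<1+\tfrac2k$ for every $k\ge5$, a routine one-variable estimate (the left side is $\approx1+\tfrac{1.02}{k}$). This gives~(\ref{ineq33}). The only point requiring care is checking that the exponents $k-d-1$ and $d-c-g$ really stay nonnegative over the whole range of admissible $(c,d)$, so that extending the two geometric sums to infinity is legitimate, and tracking which factorials may be thrown away; beyond this bookkeeping there is no genuine obstacle, as Proposition~\ref{clWeight} and the $N_{c,d}$ bound already carry all the arithmetic.
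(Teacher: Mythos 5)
Your proof is correct and follows essentially the same route as the paper: multiply Proposition~\ref{clWeight} by the $N_{c,d}$ bound, simplify via $\binom kc\le k^{k-c}/(k-c)!$ and $(d-c)!\ge g!$, rewrite as $U_g\,\varepsilon^{k-d-1}(k^2/s)^{d-c-g}$, dominate the double sum by two geometric series, and absorb the resulting constant into $1+\frac2k$. No gap.
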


We now study the behavior of sets from \(T\in\mathcal{T}_M\) that satisfy \(v(T)+1=|T|<k\) more precisely, to get finer control. Denote by \(\mathcal{R}_d\), $\mathcal X_d$ the family of sets in \(\mathcal{T}_M\) of size \(d\) and width \(d-1\) that do and do not intersect \(G_0\), respectively. Put \(r_d=|\mathcal{R}_d|\) and \(x_d=|\mathcal{X}_d|\). It should be clear that \(r_d+x_d=N_{d-1,d}\). We denote by \(R\) the sum of weights of sets in $\mathcal R_i$, $i\in[k-1]$. Analogously, we denote by $X$ the sum of weights of sets in $\mathcal X_i$, $i\in [k-1]$.
\begin{claim}
\begin{equation}
    R\leqslant\left(1+\frac{2}{k}\right)\frac{2\varepsilon}{s}r_{k-1},
\end{equation}
\begin{equation}
    X\leqslant\left(1+\frac{2}{k}\right)\frac{2\varepsilon}{s}x_{k-1}.
\end{equation}
\end{claim}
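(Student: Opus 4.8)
The plan is to prove the first inequality in detail; the second follows by the identical argument, with $x_d$ replacing $r_d$ throughout. Recall that $R=\sum_{d=2}^{k-1}r_dw_{d-1,d}$ (here $r_1=x_1=0$, since every member of $\mathcal{T}_M$ has positive width). The conceptual core is the following closure property of the trace: \emph{if $T\in\mathcal{T}$, $|T|<k$ and $b\in[(s+1)k-1]\setminus T$, then $T\cup\{b\}\in\mathcal{T}$.} To see it, choose $F\in\mathcal{F}$ with $F\cap[(s+1)k-1]=T$ and write $F=T\cup Y$ with $\varnothing\neq Y\subseteq\{(s+1)k,\dots,n\}$; since $b<\min Y$ and $b\notin F$, shiftedness gives $F':=(F\setminus\{\max Y\})\cup\{b\}\in\mathcal{F}$, and $F'\cap[(s+1)k-1]=T\cup\{b\}$, so the latter set lies in $\mathcal{T}$.

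From this I would first deduce the growth estimates $r_{d+1}\geqslant\frac{k(k-d+1)}{d}\,r_d$ and $x_{d+1}\geqslant\frac{k(k-d+1)}{d-1}\,x_d$ for $2\leqslant d\leqslant k-2$. Indeed, a set $T\in\mathcal{R}_d$ meets $G_0$ in one point and exactly $d-1$ of the blocks $B_1,\dots,B_k$ in one point each, so there are $k(k-d+1)$ pairs $(T,b)$ with $b$ lying in one of the $k-d+1$ blocks untouched by $T$; by the closure property $T\cup\{b\}\in\mathcal{T}_M$, and it has size $d+1$, width $d$, and still meets $G_0$, hence $T\cup\{b\}\in\mathcal{R}_{d+1}$. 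Each $S\in\mathcal{R}_{d+1}$ arises in at most $d$ ways (delete one of its $d$ block-points), which gives the bound; for $\mathcal{X}$ one deletes instead one of the (at most $d-1$) points of $S$ that lie alone in a block. In particular $r_d\leqslant r_{d+1}$ for every $d$.

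Next I would compare consecutive terms of $R$. Writing $w_{d-1,d}=\binom{\bar n}{k-d}/\binom{s-d+1}{k-d+1}$, a short computation gives $\frac{w_{d-1,d}}{w_{d,d+1}}=\frac{(\bar n-k+d+1)(k-d+1)}{(k-d)(s-d+1)}$ whenever $\bar n\geqslant k-d$ (and $w_{d-1,d}=0$ otherwise). Multiplying this by $r_d/r_{d+1}\leqslant d/\bigl(k(k-d+1)\bigr)$ from the previous step and using $d\leqslant k-1$, $k-d\geqslant 2$, $2(s-d+1)>s$ and $\bar n<\varepsilon(s+1)+1<2\varepsilon s$ (the last valid since $s>101k^{3}$), one obtains
\[
r_dw_{d-1,d}\ \leqslant\ \frac{\bar n}{s}\,r_{d+1}w_{d,d+1}\ \leqslant\ 2\varepsilon\,r_{d+1}w_{d,d+1}\qquad(2\leqslant d\leqslant k-2),
\]
the remaining cases (with a vanishing term) being trivial. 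Telescoping downward from $d=k-1$ yields $r_dw_{d-1,d}\leqslant(2\varepsilon)^{k-1-d}r_{k-1}w_{k-2,k-1}$, hence
\[
R\ \leqslant\ r_{k-1}w_{k-2,k-1}\sum_{j\geqslant 0}(2\varepsilon)^{j}\ =\ \frac{r_{k-1}w_{k-2,k-1}}{1-2\varepsilon}.
\]
Finally, Claim~\ref{clWeight} with $(c,d)=(k-2,k-1)$ gives $w_{k-2,k-1}\leqslant\bigl(1+\tfrac1k\bigr)\tfrac{2\varepsilon}{s}$, and since $\frac{1+1/k}{1-2\varepsilon}=\frac{50k+50}{50k-1}\leqslant 1+\tfrac2k$ (equivalently $2\leqslant 49k$), we conclude $R\leqslant\bigl(1+\tfrac2k\bigr)\tfrac{2\varepsilon}{s}\,r_{k-1}$. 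The bound for $X$ is obtained by the same chain of estimates verbatim.

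The step I expect to be the main obstacle is combining the closure property with the fiber count in the second paragraph: one must check that attaching an \emph{arbitrary} point of an untouched block to a non-full trace set again lands in $\mathcal{T}$ with the right width, and that the resulting growth map is at most $d$-to-one (respectively $(d-1)$-to-one in the $\mathcal{X}$ case). Everything afterwards is a routine manipulation of the explicit weights, where the numerical room provided by $\varepsilon=1/(100k)$ and $s>101k^{3}$ is comfortable.
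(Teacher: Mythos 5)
Your proof is correct. The combinatorial core is the same as in the paper: shiftedness gives the closure property (any non-full trace set can be extended by any unused element), which yields the double-counting inequality $k(k-d+1)r_d\leqslant d\,r_{d+1}$ (and the analogue for $x_d$), hence $r_d\leqslant r_{k-1}$. Where you differ is in the arithmetic: the paper bounds each $r_d$ by $r_{k-1}$ and each $w_{d-1,d}$ independently via Claim~\ref{clWeight}, then sums the resulting series $\tfrac{1}{s}\sum_{d}\varepsilon^{k-d}(k-d+1)$; you instead compute the exact ratio $w_{d-1,d}/w_{d,d+1}$, multiply by the exact growth ratio $r_d/r_{k-1}\leqslant d/\bigl(k(k-d+1)\bigr)$, obtain a uniform per-step contraction by $2\varepsilon$, and telescope, invoking Claim~\ref{clWeight} only for the top term $w_{k-2,k-1}$. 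Both routes go through; your telescoping bookkeeping is marginally tighter but buys nothing extra here since the final constant is the same. One small point of care you handled correctly but is worth flagging: in the $\mathcal X$ case the fiber count is $d-1$ rather than $d$ because deleting one of the two points of $S$ in the doubled block would produce a set of width $d$ (hence outside $\mathcal X_d$), and only the $d-1$ singleton block-points are admissible deletions.
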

\begin{proof}
Since \(\mathcal{F}\) is shifted, for any \(d=2,\ldots,k-2\) each set from \(\mathcal{R}_d\) lies in exactly  \(k(k-d+1)\) sets from \(\mathcal{R}_{d+1}\). At the same time, each set in  \(\mathcal{R}_{d+1}\) contains at most \(d\) sets from \(\mathcal{R}_{d}\). A simple double counting argument shows that \(k(k-d+1)r_d\leqslant dr_{d+1}\), which in particular implies \begin{equation}
    r_d\leqslant r_{k-1}
\end{equation}
for every \(d=2,\ldots,k-1\).
Then we can bound
\begin{align*}
    R=&\sum\limits_{d=2}^{k-1}w_{d-1,d}r_d\leqslant\sum\limits_{d=2}^{k-1}w_{d-1,d}r_{k-1} \\
    \le& \left(1+\frac{1}{k}\right)\sum\limits_{d=2}^{k-1}\frac{1}{s}\varepsilon^{k-d}(k-d+1)r_{k-1}\le \left(1+\frac{2}{k}\right)\frac{2\varepsilon}{s}r_{k-1},\\
\end{align*}
because $\varepsilon = 1/100k$ and $(1+\frac 1k)\sum_{d=2}^{k-1}(1/100k)^{k-d-1}(k-d+1)\le 2\left(1+\frac{2}{k}\right)$. The second inequality is proved analogously.
\end{proof}

Put \(W=W_0\) to be the sum of weights of sets in \(T\in\mathcal{T}_M\) such that \(|T|<k\). Recall that this is exactly the sum of weights of sets that contribute to $\ff$ and not to $\aaa$.

Also recall that we have fixed \(M\) and only consider sets that lie in \(G(M)\). In what follows, we are going to bound \(W\) under the assumption that 
\begin{equation}\label{ineqM}
    w_{\mathcal{F}}(M)\geqslant w_{\mathcal{A}}(M),
\end{equation}
since otherwise the inequality (\ref{ineq4}) is satisfied.
\subsection{Full transversals}
We call a set \(T\) \textit{a full transversal} if 
\begin{equation}
    v(T)=|T|=k.
\end{equation}
It is not difficult to check that the weight of a full transversal equals $1$, and the number of different full transversals is \(k^k\).
\begin{claim}\label{clTT}
There are no sets in \(\mathcal{T}_M\) such that \(v(T)=|T|<k\). In other words, 
\begin{equation}
    N_{d,d}=0
\end{equation}
for \(d=1,\ldots,k-1\).
\end{claim}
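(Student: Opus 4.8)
Argue by contradiction: assume there is a $T\in\mathcal{T}_M$ with $v(T)=|T|=d<k$, and among all such choose one with $d$ minimal (so no proper subset of $T$ lies in $\mathcal{T}_M$, i.e.\ $T$ is a minimal element of $\mathcal{T}_M$). Write $S=\{j\in[k]:T\cap B_j\neq\varnothing\}$, so $|S|=d$, let $\tau_j$ be the unique element of $T\cap B_j$ for $j\in S$, and note $T\cap G_0=\varnothing$. I would first record two facts. First, since $\mathcal{F}=\{F\in\binom{[n]}{k}:\exists\,T'\in\mathcal{T},\ T'\subset F\}$, every $k$-element superset of $T$ lies in $\mathcal{F}$; in particular $T\cup Z\in\mathcal{F}$ for any $(k-d)$-set $Z\subset[n]\setminus T$, and because $T=F\cap[(s+1)k-1]$ for some $F\in\mathcal{F}$ there is room to take $Z\subset\{(s+1)k,\dots,n\}$ (i.e.\ $\bar n\geqslant k-d$). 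Second, since $G_0,G_1,\dots,G_s$ partition $[(s+1)k-1]$ and $\nu(\mathcal{T})=s$, no non-empty subset of $G_0$ lies in $\mathcal{T}$ (such a subset, being disjoint from $G_1,\dots,G_s$, would give $s+1$ pairwise disjoint members of $\mathcal{T}$); combined with the standard fact that the trace of a shifted family is shifted — replacing an element of a set in $\mathcal{T}$ by a smaller element still in $[(s+1)k-1]$ keeps it in $\mathcal{T}$ — this constrains how $G_0$ must sit relative to $T$.

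\textbf{The matching.} The goal is to build $s+1$ pairwise disjoint members of $\mathcal{F}$, contradicting $\nu(\mathcal{F})\leqslant s$. The sets $G_i$, $i\in[s]\setminus M$, are pairwise disjoint, belong to $\mathcal{F}$ (each lies in $\mathcal{T}$ and has size $k$), and are disjoint from $\Omega:=G_0\cup B_1\cup\dots\cup B_k\cup\{(s+1)k,\dots,n\}$; there are $s-k$ of them. Since $|\Omega|=k^2+k-1+\bar n\geqslant k(k+1)$ (using $\bar n\geqslant1$), it suffices to exhibit $k+1$ pairwise disjoint members of $\mathcal{F}$ inside $\Omega$. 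For these I would take: the set $F_1:=T\cup Z$ with $Z$ a $(k-d)$-subset of $\{(s+1)k,\dots,n\}$ (in $\mathcal{F}$ by the first fact); the $k-d$ intact blocks $B_\ell$, $\ell\in[k]\setminus S$; and $d$ further sets formed from the damaged blocks $B_j\setminus\{\tau_j\}$ ($j\in S$) together with $G_0$. Most damaged blocks are repaired by putting back a sufficiently small element of $G_0$ in place of $\tau_j$, so that the repaired set precedes $B_j$ and hence lies in $\mathcal{F}$ by shiftedness, while $G_0$ itself is consumed through Proposition~\ref{spcl} ($G_0\cup\{\min B\}\in\mathcal{F}$ for a block $B$ disjoint from $G_0$), which absorbs the one damaged block that cannot be repaired by a downward replacement.

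\textbf{The obstacle.} The one substantial point is the bookkeeping in this last step: one must check that the small elements of $G_0$ can be assigned to the blocks $B_j$, $j\in S$, so that every repaired block precedes $B_j$, while reserving the right element of $G_0$ (and choosing the right block $B$) for the application of Proposition~\ref{spcl}. This is exactly where the second fact above is used: it prevents $G_0$ from lying entirely to the right of $T$, which is what makes such an assignment possible. A few boundary configurations — for instance when $\min T$ is smaller than every element of $G_0$, or when $T$ meets each $B_j$ ($j\in S$) in its minimum — fall outside the generic assignment argument and have to be handled separately, once more by applying Proposition~\ref{spcl} to an appropriate block. Everything else is routine disjointness- and cardinality-checking, so I expect the assignment step to be the technical heart of the argument.
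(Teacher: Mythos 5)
Your plan takes a genuinely different route from the paper, but it does not work. There are two interlocking problems, one a sign error and one a structural conflict.

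\textbf{The sign error.} You invoke ``no non-empty subset of $G_0$ lies in $\mathcal T$'' together with shiftedness of the trace to claim that $G_0$ cannot ``lie entirely to the right of $T$.'' The implication actually points the other way. If $g_1<\dots<g_{k-1}$ are the elements of $G_0$ and $\tau_1<\dots<\tau_d$ those of $T$, then shiftedness of $\mathcal T$ says: if $g_m\leqslant\tau_m$ for all $m\in[d]$, then $\{g_1,\dots,g_d\}\preceq T$ forces $\{g_1,\dots,g_d\}\in\mathcal T$, a forbidden subset of $G_0$. Hence there \emph{must} exist $m$ with $g_m>\tau_m$. But that is precisely the Hall condition failing for the assignment you need: an injection $\phi:T\to G_0$ with $\phi(\tau_j)<\tau_j$ (so that $(B_j\setminus\{\tau_j\})\cup\{\phi(\tau_j)\}\preceq B_j$) exists if and only if $g_m<\tau_m$ for all $m$. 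So the constraint you cite does not merely leave ``a few boundary configurations'' to handle — it guarantees that the repair assignment \emph{never} covers all $d$ damaged blocks.

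\textbf{The structural conflict.} The fallback you propose — consuming $G_0$ via Proposition~\ref{spcl} as $G_0\cup\{\min B\}$ to ``absorb'' the unrepairable block — is incompatible with using elements of $G_0$ for repairs, because $G_0\cup\{\min B\}$ contains all $k-1$ elements of $G_0$ and would intersect every repaired block $(B_j\setminus\{\tau_j\})\cup\{g_j\}$. You cannot have both in a pairwise disjoint collection. Once you give up on repairs and consume all of $G_0$ in one set (which is what the paper does), you are left with having to partition $B_k\setminus\{\min B_k\}$ and the $k-1$ punctured blocks $B_j\setminus\{\tau_j\}$ into $k-1$ members of $\mathcal F$ — and the only reasonable candidates are full transversals of $G(M)$. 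Showing that $k-1$ such transversals, pairwise disjoint and disjoint from $T$, actually lie in $\mathcal T_M$ is exactly where the paper's weight machinery enters: it first proves $W_0\leqslant(1+\tfrac2k)k^k\varepsilon<(k-1)^{k-1}$ and, using the running assumption $w_{\mathcal F}(M)\geqslant w_{\mathcal A}(M)$, concludes that all but fewer than $(k-1)^{k-1}$ full transversals belong to $\mathcal T_M$; it then exhibits $(k-1)^{k-1}$ pairwise distinct disjoint families of transversals via cyclic shifts, so at least one survives. Your proposal never invokes this quantitative input at all, and without it there is no reason the needed transversals should be in $\mathcal T_M$. This is the missing idea, and it is essential.
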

\begin{proof}
From Proposition~\ref{weightE} we get 
\begin{align*}
    W = W_0\leqslant\left(1+\frac{2}{k}\right)k^k\varepsilon.
\end{align*}
We assumed that inequality (\ref{ineqM}) holds, and thus all but at most \(\left(1+\frac{2}{k}\right)k^k\varepsilon<(k-1)^{k-1}\) full transversals must belong to \(\mathcal{T}_M\).

Let us assume the contrary, that is, there is a set \(T\in\mathcal{T}_M\) such that \(v(T)=|T|<k\). Since $\ff$ is shifted, we can w.l.o.g. assume that \(|T|=k-1\). W.l.o.g. assume that \(T\cap B_k=\varnothing\).

Let us introduce convenient notations for the elements of \(B_i\):
\begin{align*}
    &B_i=\left\{b_i^1,\ldots,b_i^k\right\},\\
    &\tilde B_i=B_i\setminus \left\{b_i^k\right\},\\
    &T=\left\{b_1^k,\ldots,b_{k-1}^k\right\}.
\end{align*}
Also, assume that \(b_k^k\) is the minimal element of \(B_k\).

A \textit{cyclic shift} on the set \(B\) with fixed numeration \(b_1,\ldots,b_N\) of its elements is a permutation of the form \((b_1\ \ldots\ b_N)^j\) in cycle notation, where $j$ is one of \(0,\ldots,N-1\).
For an arbitrary collection \(\overline{\sigma}=(\sigma_1,\ldots,\sigma_{k-1})\) of cyclic shifts on the sets \(\tilde B_1,\ldots,\tilde B_{k-1}\) (with arbitrary numeration) consider transversals
\begin{align*}
    T_{i,\overline{\sigma}}=\left\{\sigma_1(b_1^i),\ldots,\sigma_{k-1}(b_{k-1}^i),b_k^i\right\}
\end{align*}
for \(i=1,\ldots,k-1\). Transversals \(T_{1,\overline{\sigma}},\ldots,T_{k-1,\overline{\sigma}}\) are pairwise disjoint, moreover, the collections  \(\left\{T_{i,\overline{\sigma}}\right\}_{i=1}^{k-1}\) themselves do not have common transversals for different \(\overline{\sigma}\). There are \((k-1)^{k-1}\) such collections in total, and thus there is a \(\overline{\sigma}\) such that \(T_i=T_{i,\overline{\sigma}}\in\mathcal{T}_M\) for all \(i=1,\ldots,k-1\).

From Proposition~\ref{spcl} we get that \(G_0\cup\left\{b_k^k\right\}\in\mathcal{T}\). Therefore, the sets  \(T,T_1,\ldots,\) \(T_{k-1},G_0\cup\left\{b_k^k\right\}\) together with \(G_i\), \(i\notin M\), are pairwise disjoint and belong to $\mathcal T$. Since there are \(s+1\) sets in this collection, we get a contradiction with $\nu(\mathcal T) \le s$. 
\end{proof}

Propositions~\ref{clTT} and~\ref{weightE} imply that 
\begin{equation}
    W=W_1\leqslant\left(1+\frac{2}{k}\right)\frac{k^{k+2}\varepsilon}{s}.
\end{equation}
Therefore, \(\mathcal{T}_M\) contains all but at most \(\left(1+\frac{2}{k}\right)\frac{k^{k+2}\varepsilon}{s}<(k-2)^{k-1}\) full transversals. Being more accurate,  we can write \(W=R+S+W_2\), and thus
\begin{equation}\label{ineqAlmF}
    W\leqslant\left(1+\frac{2}{k}\right)\frac{2\varepsilon}{s}\left(r_{k-1}+x_{k-1}+\frac{k^{k+4}}{4s}\right).
\end{equation}
\subsection{Almost full transversals}
We call a set $T$ an \textit{almost full transversal} if 
\begin{equation}
    |T|-v(T)=1,\ |T|=k.
\end{equation}
The weight of an almost full transversal equals \(\frac{1}{s-k+1}>\frac{1}{s}\). Since \(w_{\mathcal{F}}(M)\geqslant w_{\mathcal{A}}(M)\), all but at most $Ws$ almost full transversals are present in  \(\mathcal{T}_M\). Taking into account that \(r_{k-1}\leqslant\frac{(k-1)^2k^{k-1}}{2}\), as well as (\ref{ineqAlmF}) and (\ref{ineqM}), we get that \(W s\leqslant\left(1+\frac{2}{k}\right)2\varepsilon\left(x_{k-1}+k^{k+1}\right)\).
\begin{claim}
\begin{equation}
    x_{k-1}\leqslant\left(1+\frac{3}{k}\right)\varepsilon k^{k+1}.
\end{equation}
\end{claim}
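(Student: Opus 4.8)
The plan is to assume that the asserted bound fails and to contradict $\nu(\mathcal T)=s$. Since adjoining the $s-k$ sets $G_i$, $i\notin M$, to a matching of size $k+1$ inside $\mathcal T_M$ would give a matching of size $s+1$ in $\mathcal T$ (impossible, as $\nu(\mathcal T)=s$), it suffices to exhibit, for a single $T\in\mathcal X_{k-1}$, a family of $k$ pairwise disjoint sets of $\mathcal T_M$ disjoint from $T$. Fix such a $T$ and normalise notation so that $|T\cap B_1|=2$, $|T\cap B_i|=1$ for $i=2,\dots,k-2$, and $T$ is disjoint from $B_{k-1}$, $B_k$ and $G_0$.

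As $|G(M)\setminus T|=k^2$, I consider partitions of $G(M)\setminus T$ into $k$ sets of size $k$ of the following special shape: one set $G_0\cup\{b\}$ with $b=\min B_k$ (which lies in $\mathcal T$ by Proposition~\ref{spcl} applied to $B=B_k$), one almost full transversal $S^*$, and $k-2$ full transversals $S_1,\dots,S_{k-2}$. A comparison of the column sizes of $G(M)\setminus T$ forces $S^*$ to miss $B_1$, to double $B_{k-1}$, and to meet each of $B_2,\dots,B_{k-2},B_k$ in exactly one point. Following the proof of Proposition~\ref{clTT}, I parametrise the candidate partitions by collections of cyclic shifts on $B_2,\dots,B_{k-2},B_k$ (each with $k-1$ available points) and on $B_{k-1}$, anchoring the labelling of $S_1,\dots,S_{k-2}$ by $T\cap B_1$. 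This gives $(k-1)^{k-2}k$ candidate partitions enjoying the crucial \emph{independence} property: each full transversal, and each almost full transversal, is a piece of at most one of them (for this fixed $T$). If any candidate partition has all $k$ pieces in $\mathcal T_M$, then it together with $T$ is a matching of size $k+1$ in $\mathcal T_M$ — contradiction. Hence every candidate partition contains a set not in $\mathcal T_M$.

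Next I count, using the standing assumption \eqref{ineqM}. The number of full transversals missing from $\mathcal T_M$ is at most $W$, which by $W\le(1+\tfrac2k)\tfrac{k^{k+2}\varepsilon}{s}$ and $s>101k^3$ is tiny next to $(k-1)^{k-2}k$; by independence, at most $W$ candidate partitions (per $T$) are spoiled by a full transversal, so at least $(k-1)^{k-2}k-W$ distinct almost full transversals ``attached to $T$'' are missing from $\mathcal T_M$. I then double-count the incidences between $T\in\mathcal X_{k-1}$ and their candidate partitions: a fixed missing almost full transversal $S^*$ can be the distinguished piece of candidate partitions for only boundedly many $T$ — reconstructing such a $T$ means choosing its two points in the column $S^*$ misses, its second missed column, and its points in the remaining columns — so that $x_{k-1}\le c\cdot A$ for a small absolute constant $c$, where $A$ is the number of missing almost full transversals. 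Since we already know $A\le Ws\le(1+\tfrac2k)2\varepsilon\bigl(x_{k-1}+k^{k+1}\bigr)$, substituting yields $x_{k-1}\le 2c(1+\tfrac2k)\varepsilon\bigl(x_{k-1}+k^{k+1}\bigr)$, and as $2c(1+\tfrac2k)\varepsilon<1$ this rearranges to $x_{k-1}\le(1+\tfrac3k)\varepsilon k^{k+1}$ provided $c$ is chosen sharply enough.

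I expect the delicate point to be exactly this last count: one must set up the family of candidate partitions and the cyclic-shift parametrisation so that \emph{both} the number of partitions per $T$ \emph{and} the ``overlap'' (the number of incidences killed by a single missing almost full transversal) carry the right powers of $k$, since a naive overlap estimate loses a factor of order $k$ and only yields $x_{k-1}=O(\varepsilon k^{k+2})$. Removing that factor — e.g.\ by building each partition from \emph{two} almost full transversals, so that the pair already determines both missed columns of $T$ — while keeping track of all the $(1+O(1/k))$ corrections is the technical core of the argument.
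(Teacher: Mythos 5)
Your general strategy is the right one — identify almost full transversals ``attached'' to each $T\in\mathcal X_{k-1}$, show they are missing from $\mathcal T_M$, then double-count against the global bound on missing transversals — and your overlap bound (at most $\tfrac{k(k-2)(k-1)^{k-2}}{2}$ sets $T$ per attached almost full transversal) is essentially the same count the paper makes. The gap is exactly the one you flag at the end: your forward count is too small by a factor of order $k$, and your proposal does not close it. By insisting that the almost full transversal $S^*$ be one piece of a partition of $G(M)\setminus T$ built from cyclic shifts, and fixing in advance which column $S^*$ doubles, you only register $(k-1)^{k-2}k$ almost full transversals per $T$: the cyclic shift on the doubled column produces only $k$ of the $\binom k2$ possible pairs in it. That forward/overlap ratio is $\tfrac{2}{k-2}$, giving $x_{k-1}\lesssim \tfrac{k}{2}A$ where $A$ is the number of missing almost full transversals, whence only $x_{k-1}=O(\varepsilon k^{k+2})$, as you compute.

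The paper avoids this by \emph{decoupling} two things you merge. It defines a ``bad pair'' $(T,Q)$ by a condition on the two sets alone: $T$ has type $(l,\{i,j\})$, the almost full transversal $Q$ misses $B_l$, doubles one of $B_i,B_j$ (say $B_i$) in \emph{any} of its $\binom k2$ pairs, is disjoint from $T$, and meets $B_j$ in a non-minimal element. This gives $2\binom k2(k-1)(k-1)^{k-3}=k(k-1)^{k-1}$ masks per $T$ — larger than your count by a factor $k-1$ (the $\binom k2$ instead of $k$, and a factor $2$ for choosing which of $B_i,B_j$ to double, minus a small loss). The cyclic-shift device is then used \emph{only} to certify that each such $Q$ is missing: one finds $k-2$ full transversals from $\mathcal T_M$ (which exist because the family of $(k-2)^{k-1}$ collections of cyclic-shift full transversals cannot all be spoiled, since fewer than $(k-2)^{k-1}$ full transversals are missing), and then $T,Q,G_0\cup\{\min B_j\}$ together with these and $G_i$, $i\notin M$, give $s+1$ disjoint sets. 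With the forward count $k(k-1)^{k-1}$ and the overlap bound $\tfrac{k(k-2)(k-1)^{k-2}}{2}$, the ratio becomes $\tfrac{2(k-1)}{k-2}>2$, i.e.\ $2x_{k-1}\leqslant m_k\leqslant Ws\leqslant(1+\tfrac2k)2\varepsilon(x_{k-1}+k^{k+1})$, which rearranges to the claim. So the fix is not to build each partition from two almost full transversals, but to stop demanding that the mask itself sit inside a rigid cyclic-shift family; only the auxiliary full transversals need that structure.
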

\begin{proof}
We say that the set \(T\in\mathcal{X}_{k-1}\) has type \(\left(l,\{i,j\}\right)\) if \(|T\cap B_l|=2\), \(|T\cap B_i|=|T\cap B_j|=0\). An almost full transversal $Q$ is called a \textit{mask} of type \(\left(l,f\right)\) if \(|Q\cap B_l|=0\), \(|Q\cap B_f|=2\).

We say that a set \(T\in\mathcal{X}_{k-1}\) of type \(\left(l,\{i,j\}\right)\) and mask \(Q\) form a \textit{bad pair } \(\left(T,Q\right)\) if (i) \(Q\) has one of the types \(\left(l,i\right)\) or \(\left(l,j\right)\), (ii) moreover, \(T\cap Q=\varnothing\) and (iii) the set \(B_m\) that intersect \(Q\) in $1$ element  but does not intersect \(T\) (that is, \(m=i\) or \(m=j\)), intersect \(Q\) in an element that is not minimal in $B_m$.

For any set \(T\in\mathcal{X}_{k-1}\) and a bad pair \(\left(T,Q\right)\) the mask \(Q\) does not belong to \(\mathcal{T}_M\). Indeed, w.l.o.g. assume that \(T\) has type \(\left(l,\{i,j\}\right)\) and \(Q\) has type  \(\left(l,i\right)\). Denote by $b$ the minimal element of \(B_j\). Using practically the same reasoning with cyclic shifts as in the proof of Proposition~\ref{clTT} and the fact that \(\mathcal{T}_M\) contains all but at most \(W_1<(k-2)^{k-1}\) full transversals, we can find full transversals  \(T_i\in\mathcal{T}_M\), \(i=1,\ldots,k-2\), such that the sets \(T,T_1,\ldots,T_{k-2},Q,G_0\cup\left\{b\right\}\) are pairwise disjoint. If \(Q\in\mathcal{T}_M\) then, together with \(G_i\), \(i\notin M\), we get \(s+1\) pairwise disjoint sets in $\mathcal T$, a contradiction.

Denote by \(m_k\) the number of masks \(Q\), for which there exists at least $1$ set  \(T\in\mathcal{X}_{k-1}\) such that the pair \(\left(T,Q\right)\) is bad. For any given mask \(Q\) the number of such sets \(T\) is at most \(\frac{k(k-1)^{k-2}(k-2)}{2}\). Indeed, let \(Q\) have type \(\left(l,i\right)\). Then we first fix  \(j\notin \left\{i,l\right\}\) and count the number of corresponding sets \(T\) of type \(\left(l,\{i,j\}\right)\). We can choose two elements in $B_l$ in $\frac {k(k-1)}2$ ways and the elements from the remaining $(k-3)$ sets $B_f$ in $(k-1)^{k-3}$ ways.

Given a set \(T\in\mathcal{X}_{k-1}\), the number of masks that form a bad pair with  \(T\) is  \(k(k-1)^{k-1}\). Therefore, an easy double counting implies
\begin{equation}
    2x_{k-1}\leqslant m_k.
\end{equation}
We have already mentioned that all such masks do not  belong to \(\mathcal{T}_M\). Thus, we get 
\begin{align*}
    2x_{k-1}\leqslant m_k\leqslant\left(1+\frac{2}{k}\right)2\varepsilon\left(x_{k-1}+k^{k+1}\right),
\end{align*}
which, after rearranging and applying some easy bounds, implies the proposition. 
\end{proof}
The last proposition, together with the bound on \(r_{k-1}\) and (\ref{ineqAlmF}), implies that 
\begin{equation}
    W\leqslant\left(1+\frac{3}{k}\right)\frac{2\varepsilon k^{k+1}}{s}.
\end{equation}
Therefore, \(\mathcal{T}_M\) contains all but at most  \(\left(1+\frac{3}{k}\right)2\varepsilon k^{k+1}\) transversals and full transversals.
\begin{claim}\label{clRX}
\begin{equation}
    r_{k-1}=x_{k-1}=N_{k-3,k-1}=0.
\end{equation}
\end{claim}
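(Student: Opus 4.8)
The plan is to follow the template of Propositions~\ref{clTT} and the almost-full-transversal argument, pushing the ``almost all transversals of a given type are present'' observation one level deeper. By the inequality displayed just before the claim, $\mathcal T_M$ misses at most $\left(1+\frac 3k\right)2\varepsilon k^{k+1}$ sets among the full transversals and the almost full transversals; call this quantity $E$, and note $E < k^{k-2}$ in our regime (this is where $s > 101k^3$ and $k\ge 5$ are used). The strategy is: assume for contradiction that one of $r_{k-1}, x_{k-1}, N_{k-3,k-1}$ is nonzero, pick a witnessing set $T$ of size $k-1$ with the appropriate width ($k-1$ intersecting $G_0$, $k-1$ not intersecting $G_0$, or $k-3$), and build $s+1$ pairwise disjoint sets in $\mathcal T$ by completing $T$ with full/almost-full transversals drawn from $\mathcal T_M$, one set of the form $G_0\cup\{b\}$ coming from Proposition~\ref{spcl} (or an almost full transversal when $T$ already meets $G_0$), and the $G_i$ with $i\notin M$.

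The key steps, in order. First, fix $T$ witnessing nonvanishing and describe which of the $B_i$ it misses: for $\mathcal R_{k-1}$ and $\mathcal X_{k-1}$ it misses exactly one block $B_k$; for $N_{k-3,k-1}$ it misses two blocks, say $B_{k-1}$ and $B_k$, and has width $k-3$ so it meets $G_0$ (since $|T| - v(T) = 2$), in fact with $|T\cap G_0| = 2$ or with a doubled block — one must do a short case split here. Second, set up the cyclic-shift counting exactly as in Proposition~\ref{clTT}: over cyclic shifts of the free blocks one gets a family of $(k-1)^{k-1}$ (resp.\ $(k-2)^{k-2}$ in the two-block case, with an extra factor for choosing where the ``+1'' element goes) pairwise disjoint completions, no two collections sharing a transversal, so since $E$ is smaller than the number of collections there is a choice $\overline\sigma$ with all completions in $\mathcal T_M$. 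Third, supply the last one or two sets: when $T$ avoids $G_0$ use Proposition~\ref{spcl} to get $G_0\cup\{b\}\in\mathcal T$ for $b$ the minimal element of the relevant missed block; when $T$ meets $G_0$ (the $N_{k-3,k-1}$ and $\mathcal R$ cases), instead complete the two missed blocks $B_{k-1},B_k$ together with $G_0$'s leftover element by one more almost full transversal or full transversal from $\mathcal T_M$, again produced by the cyclic-shift argument. Fourth, count: $T$, the $\approx k-2$ (or $k-3$) completions, the extra set(s), and the $s - k$ sets $G_i$, $i\notin M$, give $s+1$ pairwise disjoint members of $\mathcal T$, contradicting $\nu(\mathcal T)=s$. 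Conclude $r_{k-1}=x_{k-1}=N_{k-3,k-1}=0$.

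The main obstacle is the bookkeeping in the $N_{k-3,k-1}$ case: here $T$ has two ``missing'' blocks and ``surplus'' mass $2$ living on $G_0$ and/or a doubled block, so completing to exactly $s+1$ disjoint sets requires getting the arithmetic of the cyclic-shift completions right and checking that the available number of shift-collections, $(k-2)^{k-2}$ up to small polynomial factors, still comfortably exceeds $E$. One must also verify disjointness between $T$, the constructed full transversals, and the set(s) from Proposition~\ref{spcl} — this uses that the $b$ chosen is minimal in its block so it is fixed by every cyclic shift on $\tilde B_i$, exactly as in the proof of Proposition~\ref{clTT}. The $r_{k-1}$ and $x_{k-1}$ vanishing should be essentially a rerun of Proposition~\ref{clTT} (indeed $x_{k-1}$ is literally that argument one cardinality higher), so the real content is the $N_{k-3,k-1}=0$ part, and I would write that case in full and treat the other two by saying they follow ``by the same reasoning as in Proposition~\ref{clTT}''.
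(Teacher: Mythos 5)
Your plan misdescribes the structure of the witnessing sets, and these errors propagate into the construction. A set $T$ with $|T|=k-1$ and width $k-2$ (the $\mathcal R_{k-1}$ and $\mathcal X_{k-1}$ cases) meets only $k-2$ of the blocks $B_1,\dots,B_k$, hence misses \emph{two} of them, not one; a set counted by $N_{k-3,k-1}$ has width $k-3$ and thus misses \emph{three} blocks, not two. Moreover such a $T$ need not meet $G_0$ at all: with width $k-3$ and size $k-1$ one may have $|T\cap G_0|=0$ with two blocks doubled, or $|T\cap G_0|\in\{1,2\}$. This matters because a literal rerun of Proposition~\ref{clTT} (build $k-1$ full transversals via cyclic shifts and append $G_0\cup\{b\}$ from Proposition~\ref{spcl}) fails already for $\mathcal X_{k-1}$: after removing $T$ (which doubles a block, say $B_1$) and $G_0\cup\{b\}$, the leftover $\widetilde B_1=B_1\setminus T$ has only $k-2$ elements, so you cannot fit $k-1$ pairwise disjoint \emph{full} transversals, and since $G_0$ is exhausted by $G_0\cup\{b\}$ you cannot borrow an element of $G_0$ to turn one of them into an almost full transversal either. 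The same arithmetic obstruction, in worse form, appears in the $N_{k-3,k-1}$ case, which you acknowledge is the hard one but do not resolve.

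The paper's proof avoids this by \emph{not} invoking Proposition~\ref{spcl} at all in this step. Instead it treats all three quantities (in fact all $T$ with $|T|=k-1$ and $|T|-v(T)\in\{1,2\}$) in one unified construction: setting $p=|T\cap(B_1\cup\dots\cup B_c)|$, it builds $k$ pairwise disjoint sets $Q_1,\dots,Q_k\subset G(M)\setminus T$ via cyclic shifts on $G_0\setminus T,\widetilde B_1,\dots,\widetilde B_k$, where $Q_1,\dots,Q_p$ are \emph{almost full} transversals each absorbing one leftover element of $G_0$, and $Q_{p+1},\dots,Q_k$ are full transversals. Since $|G(M)\setminus T|=k^2$, these $k$ $k$-sets exactly tile $G(M)\setminus T$; together with $T$ and the $s-k$ sets $G_i$, $i\notin M$, this gives $s+1$ pairwise disjoint members of $\mathcal T$ once a shift is found with all $Q_i\in\mathcal T_M$. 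The counting is then uniform: the total number of shift collections is $(k-a_0)\cdots(k-a_c)k^{k-c}\geq\bigl(1-\frac1k\bigr)^{(k^2-k)/(k-3)}k^{k+1}$, while each missing full or almost full transversal rules out at most $3k$ collections (using $a_i\leq3$, which is where $|T|-v(T)\leq 2$ is needed). You would need to discover this redistribution of $G_0$ among the $Q_i$ to make your case analysis go through; as written, the $G_0\cup\{b\}$ route blocks it.
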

\hspace{-6.15mm}\textit{Proof.} Let us prove that \(\mathcal{T}_M\) does not contain sets \(T\) such that 
\begin{align*}
    |T|-v(T)\in\left\{1,2\right\}, \ \ \ |T|<k.
\end{align*}
Arguing indirectly, assume that there is such \(T\in\mathcal{T}_M\). 
Since \(\mathcal{F}\) is shifted, we can add to \(T\) any \(k-1-|T|\) elements from \(G(M)\) and still have  \(T\in\mathcal{T}_M\). We add  elements such that \(|T|-v(T)\) remains fixed. Put \(c=v(T)<|T|=k-1\). Then \(c\geqslant k-3\).

Without loss of generality, \(T\) has non-empty intersection with each of \(B_1,\ldots,B_c\). Put \(a_i=|T\cap B_i|\) for \(i=1,\ldots,c\). Let us also put \(p_0=0\), \(p_i=a_1+\ldots+a_i\), \(i=1,\ldots,c\). Then \(p=p_c\leqslant k-1\), \(a_0:=k-p\geqslant 1\). Since \(c\geqslant k-3\), we have \(a_i\leqslant 3\), \(a_0=k-p\leqslant 3\).

We introduce convenient notation for  the elements of \(G(M)\):
\begin{align*}
    &B_i=\left\{b_i^1,\ldots,b_i^k\right\},\\
    &G_0=\left\{g_1,\ldots,g_{k-1}\right\},\\
    &T=\bigcup\limits_{i=1}^{c}\left\{b_i^{p_{i-1}+1},\ldots,b_i^{p_i}\right\}\cup\left\{g_{p+1},\ldots,g_{k-1}\right\},\\
    &\tilde B_i=B_i\setminus T,\ i=1,\ldots,c,\\
    &\tilde B_i=B_i,\ i=c+1,\ldots,k.
\end{align*}

Put \(\mu_i=\min\{j: p_j\geqslant i\}\), \(i=1,\ldots,p\). For a collection \(\overline{\pi}=\left(\pi_0,\pi_1,\ldots,\pi_k\right)\) of cyclic shifts on \(G_0\setminus T,\tilde B_1,\ldots,\tilde B_k\) (with arbitrary numeration) consider pairwise disjoint sets 
\begin{align*}
    &Q_{i,\overline\pi}=\left\{\pi_1(b_1^i),\ldots,\pi_{\mu_i-1}(b_{\mu_i-1}^i),\pi_0(g_i),\pi_{\mu_i+1}(b_{\mu_i+1}^i),\ldots,\pi_k(b_k^i)\right\},\ i=1,\ldots,p,\\
    &Q_{i,\overline\pi}=\left\{\pi_1(b_1^i),\ldots,\pi_k(b_k^i)\right\}, i=p+1,\ldots,k.
\end{align*}
\begin{wrapfigure}{l}{0.35\textwidth}
\captionsetup{font=scriptsize}
\includegraphics[width=0.98\linewidth]{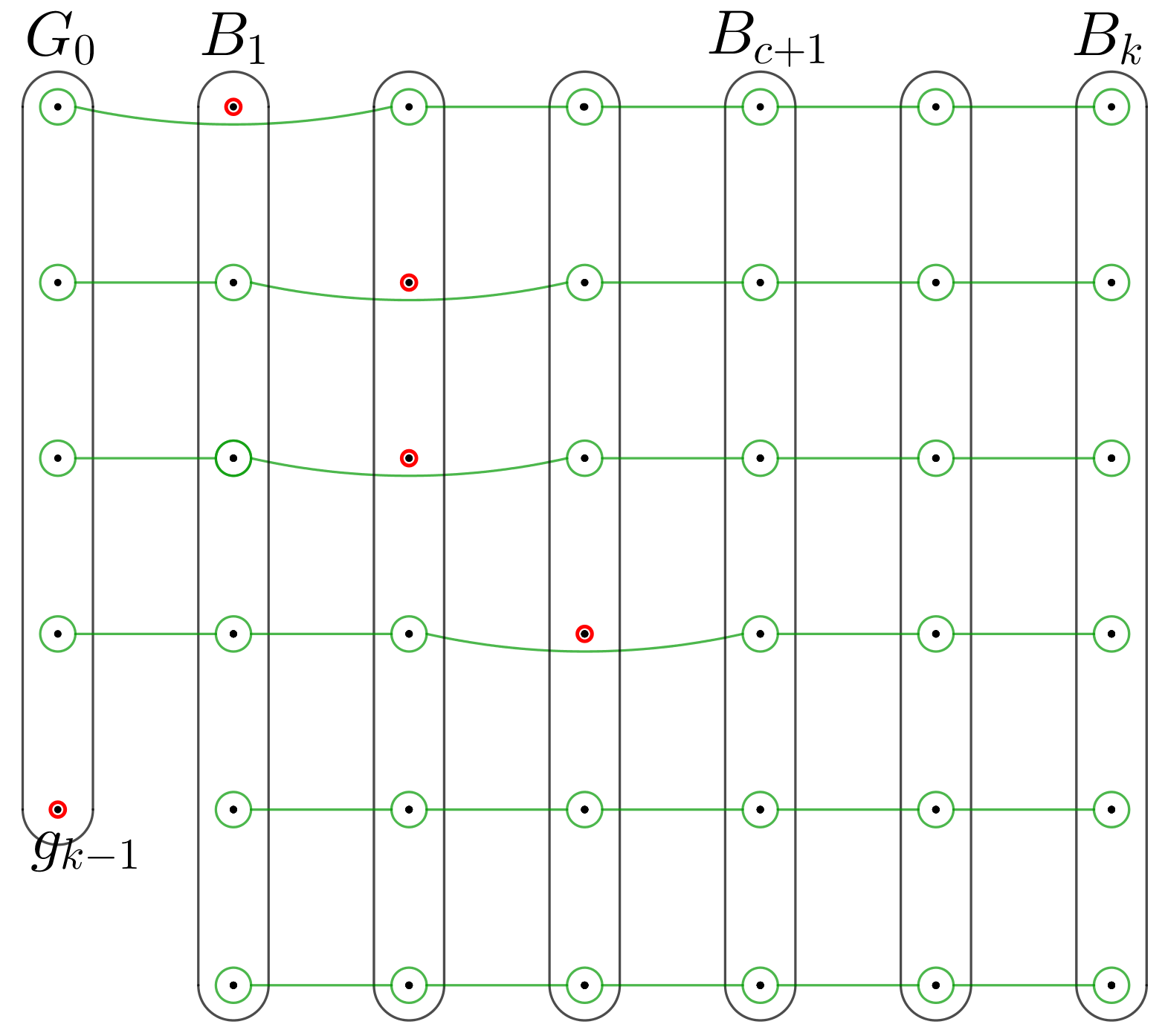} 
\caption{The elements of \(T\) are marked in red, \(p=k-2\). The transversal is depicted in green for identity permutations}
\label{fig:wrapfig}
\end{wrapfigure}
For \(i\leqslant p\) the set \(Q_{i,\overline\pi}\) is an almost full transversal, and for  \(i>p\) it is a full transversal. We cannot guarantee that for different \(\overline{\pi}\) the families \(\left\{Q_{i,\overline{\pi}}\right\}_{i=1}^k\) are disjoint. Still, we can bound the number of shifts \(\overline{\pi}\) such that \(Q=Q_{i,\overline{\pi}}\) for some \(i\in[k]\).


Take an arbitrary Q and assume that there is at least one  shift such that \( Q=Q_{i,\overline{\pi}}\). Assume that \(\overline{\sigma}\) satisfies \(Q_{i,\overline\pi}=Q_{j,\overline\sigma}\) for some \(j\). If \(i\leqslant p\) then we get \(j\leqslant p\) and \(\mu_i=\mu_j\), and if \(i>p\) then we get \(j>p\). Putting \(\mu_i=0\) for \(i>p\), in both cases we get \(\mu_i=\mu_j\). Also, in both cases all the shifts \(\sigma_l\), \(l=0,\ldots,k\), except \(\sigma_{\mu_i}\), are uniquely determined by \(j\). The number of \(j\) such that \(\mu_i=\mu_j\) is \(a_{\mu_i}\). Thus, the number of shifts \(\overline{\pi}\) such that \(Q=Q_{i,\overline{\pi}}\) is at most \(a_{\mu_i}(k-a_{\mu_i})\leqslant 3k\).

The total number of shifts \(\overline{\pi}\) is 
\begin{align*}
    &(k-a_0)\ldots(k-a_c)k^{k-c}\geqslant \left(1-\frac{1}{k}\right)^{\frac{k-1}{k-3}(a_0+\ldots+a_c)}k^{k+1}=\\
    &=\left(1-\frac{1}{k}\right)^{\frac{k^2-k}{k-3}}k^{k+1}>3k\cdot\left(1+\frac{3}{k}\right)2\varepsilon k^{k+1}.
\end{align*}
Therefore, for some collection of shifts \(\overline\pi\) all \(Q_i=Q_{i,\overline\pi}\) belong to \(\mathcal{T}_M\). Together with \(T\) and \(G_i\), \(i\notin M\), we get \(s+1\) pairwise disjoint sets in \(\mathcal{T}\), a contradiction.\qed

\subsection{Concluding the proof of Theorem~\ref{th2}}
Propositions~\ref{clRX} and~\ref{weightE} together with (\ref{ineqM}) imply
\begin{equation}
    W=W_3\leqslant\left(1+\frac{2}{k}\right)\frac{k^{k+6}\varepsilon}{6s^3}.
\end{equation}
Therefore, all but at most  \(\left(1+\frac{2}{k}\right)\frac{k^{k+6}\varepsilon}{6s^2}<k^{k-1}\) full and almost full transversals are missing.

To conclude the proof of the theorem, it is sufficient to prove the following. 
\begin{claim}
The family \(\mathcal{T}_M\) does not contain sets \(T\) such that \(|T|<k\).
\end{claim}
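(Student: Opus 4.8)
The plan is to assume for contradiction that some $T\in\mathcal T_M$ has $|T|<k$, and to rerun the cyclic-shift construction from the proof of Proposition~\ref{clRX} — which this time must cover a set $T$ whose width $c$ may be as small as $1$. By shiftedness we may enlarge $T$ to size $k-1$ while staying in $\mathcal T_M$, and then Propositions~\ref{clTT} and~\ref{clRX} force $|T|-v(T)\ge 3$, i.e.\ $c:=v(T)\le k-4$. Relabel so that $T$ meets $B_1,\dots,B_c$, and put $a_i=|T\cap B_i|$ (so each $a_i\ge 1$), $p=a_1+\dots+a_c$, $a_0=k-p$, $b_i=k-a_i$. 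As in Proposition~\ref{clRX}, for each collection $\overline\pi=(\pi_0,\pi_1,\dots,\pi_k)$ of cyclic shifts on $G_0\setminus T,\tilde B_1,\dots,\tilde B_c,B_{c+1},\dots,B_k$ one obtains $k$ pairwise-disjoint transversals $Q_{1,\overline\pi},\dots,Q_{k,\overline\pi}$ contained in $G(M)\setminus T$: the first $p$ are almost full (each missing one of $B_1,\dots,B_c$, with its extra element in $G_0$) and the last $k-p$ are full. If some $\overline\pi$ has all $k$ of these in $\mathcal T_M$, then they, $T$, and the sets $G_j$ with $j\notin M$ form $s+1$ pairwise-disjoint members of $\mathcal T$, contradicting $\nu(\mathcal T)=s$; so it suffices to produce such a $\overline\pi$.

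There are $N:=\bigl(\prod_{i=0}^{c}b_i\bigr)k^{k-c}$ collections, and I would bound the number that are \emph{spoiled}, meaning some $Q_{i,\overline\pi}\notin\mathcal T_M$. For a fixed transversal $Q$, the equation $Q=Q_{i,\overline\pi}$ determines every shift in $\overline\pi$ except the one acting on the part of $G(M)\setminus T$ that $Q_{i,\overline\pi}$ misses, and pairwise disjointness of the $Q_{\bullet,\overline\pi}$ prevents a collection from being counted for two values of $i$; hence at most $a_\mu b_\mu$ collections use a given $Q$, where $\mu\in\{0,\dots,c\}$ indexes the missed $B_\mu$ ($\mu=0$ if $Q$ is full). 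Crucially, only full transversals and almost-full transversals missing some $B_i$ with $i\le c$ and carrying their extra element in $G_0$ can ever appear among the $Q_{i,\overline\pi}$; and by Proposition~\ref{weightE} together with (\ref{ineqM}), at most $W=W_3$ full and at most $Ws$ almost-full transversals are missing from $\mathcal T_M$. So the number of spoiled collections is at most $(W+Ws)\max_{0\le\mu\le c}a_\mu b_\mu\le 2Ws\max_\mu a_\mu b_\mu$.

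It remains to check $2Ws\max_\mu a_\mu b_\mu<N$. Since $\prod_{i=0}^{c}b_i=b_\mu\prod_{i\ne\mu}b_i$, we have $N/(a_\mu b_\mu)=\bigl(\prod_{i\ne\mu}b_i\bigr)k^{k-c}/a_\mu$, and a short optimization over admissible tuples $(a_i)$ shows $\bigl(\prod_{i\ne\mu}b_i\bigr)/a_\mu\ge (k-1)^c/(k-c)$ for every $\mu$ (the extreme configuration being one $a_i=k-c$ and all others $=1$, forcing $p=k-1$). Hence
\[
\frac{N}{\max_\mu a_\mu b_\mu}\ \ge\ \frac{(k-1)^c}{k-c}\,k^{k-c}\ =\ \frac{k^k}{k-c}\Bigl(1-\tfrac{1}{k}\Bigr)^{c}\ \ge\ \frac{k^{k-1}}{4},
\]
so the number of spoiled collections is at most $8Ws\,N/k^{k-1}$. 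Finally, $8Ws<k^{k-1}$: with $W_3\le(1+\tfrac{2}{k})\frac{k^{k+6}\varepsilon}{6s^3}$ and $\varepsilon=\tfrac{1}{100k}$ this is equivalent to $s^2>\tfrac{1}{75}(1+\tfrac{2}{k})k^6$, which holds with enormous room since $s>101k^3$. Thus not every collection is spoiled, and the desired contradiction follows, completing the proof of the claim and of Theorem~\ref{th2}.

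I expect the two delicate points to be: (i) the bookkeeping in the second paragraph — pinning down exactly which transversals can arise as some $Q_{i,\overline\pi}$, so that only the \emph{missing relevant} transversals (at most $W+Ws$ of them) need to be counted; and (ii) the optimization above, where one must notice that although $b_\mu$ can drop all the way to $c$, the remaining $b_i$ are then pushed up toward $k-1$, so that $N/\max_\mu a_\mu b_\mu$ stays of order $k^{k-1}$ — far larger than the minuscule weight $W_3$ permitted by $\varepsilon=1/(100k)$ and $s>101k^3$.
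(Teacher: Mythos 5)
Your proposal follows the same strategy as the paper: enlarge $T$ to size $k-1$ by shiftedness, run the cyclic-shift construction from Proposition~\ref{clRX} to produce, for each $\overline\pi$, $p$ pairwise-disjoint almost full transversals and $k-p$ full transversals inside $G(M)\setminus T$, count how many $\overline\pi$ can be ``spoiled'' by a single missing transversal $Q$ (namely $a_\mu(k-a_\mu)$ with $\mu$ the index of the block $Q$ misses), and then show that the number of spoiled collections is strictly smaller than the total number $N=(k-a_0)\cdots(k-a_c)k^{k-c}$ of collections, using the bound $W=W_3$ from Propositions~\ref{clRX} and~\ref{weightE}. The logic of that outline is correct and matches the paper.

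The one place you diverge is the final numerical step. You assert, without proof, that ``a short optimization over admissible tuples shows $\bigl(\prod_{i\neq\mu}(k-a_i)\bigr)/a_\mu\ge (k-1)^c/(k-c)$,'' and from it derive $N/\max_\mu a_\mu(k-a_\mu)\ge k^{k-1}/4$. The paper instead uses a one-line Bernoulli-type estimate that is both simpler and gives a cleaner constant: writing $(k-a_0)\cdots(k-a_c)k^{k-c}=(k-a_i)k^k\prod_{j\neq i}\bigl(1-\tfrac{a_j}{k}\bigr)$ and using $\prod_{j\neq i}\bigl(1-\tfrac{a_j}{k}\bigr)\ge 1-\sum_{j\neq i}\tfrac{a_j}{k}=\tfrac{a_i}{k}$ (valid since $\sum_j a_j=k$) yields directly $N\ge a_i(k-a_i)k^{k-1}$ for every $i$, i.e.\ $N/\max_i a_i(k-a_i)\ge k^{k-1}$, with no case analysis, no use of $c\le k-4$, and no need for your $1/4$ fudge factor. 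Your claimed optimum (the inequality with extremal configuration one $a_i=k-c$, rest $1$) does appear to be true and is in fact somewhat stronger than $k^{k-1}$, but as stated it is an unverified assertion in exactly the spot where the paper supplies a short explicit calculation; you should replace it with the Bernoulli estimate (or prove the optimization). Everything else --- the accounting that at most $Ws$ relevant transversals are absent from $\mathcal T_M$, the per-transversal multiplicity $a_\mu(k-a_\mu)$, and the final comparison $8Ws<k^{k-1}$ under $\varepsilon=1/(100k)$ and $s>101k^3$ --- is in order.
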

\begin{proof}
Assume that there is such a set \(T\in\mathcal{T}_M\). As in the proof of Proposition~\ref{clRX}, we assume that \(|T|=k-1\). Put \(v(T)=c\). W.l.o.g., assume  \(a_i=|T\cap B_i|>0\), \(i=1,\ldots,c\). Put \(a_0=|T\cap G_0|=k-a_1-\ldots-a_c\geqslant 1\). Repeating the argument from Proposition~\ref{clRX}, it is sufficient to check that 
\begin{align*}
    (k-a_0)\ldots(k-a_c)k^{k-c}\geqslant\max\limits_{i=0,\ldots,c}\left\{a_i(k-a_i)\right\}\cdot k^{k-1}.
\end{align*}
This is an easy calculation:
\begin{align*}
    &(k-a_0)\ldots(k-a_c)k^{k-c}=(k-a_i)k^k\prod\limits_{j\neq i}\left(1-\frac{a_j}{k}\right)\geqslant\\
    &\geqslant (k-a_i)k^k\left(1-\sum\limits_{j\neq i}\frac{a_j}{k}\right)=(k-a_i)k^{k-1}a_i.
\end{align*}
The last inequality holds for each \(i=0,\ldots,c\), which implies the proposition.
\end{proof}

We proved that all sets in  \(\mathcal{T}_M\) have cardinality \(k\).
This concludes the proof of inequality (\ref{ineq4}) and of Theorem~\ref{th2}.


\begin{thebibliography}{100}

\bibitem{aletal} N. Alon, P. Frankl, H. Huang, V. R\" odl, A. Ruci\' nski and B. Sudakov, {\it Large matchings in uniform hypergraphs and the conjectures of Erd\H os and Samuels}, Journal of Comb. Theory, Ser. A 119 (2012), 1200--1215.




\bibitem{BDE} B. Bollob\'as, D.E. Daykin and P. Erd\H os, \textit{Sets of independent edges of a hypergraph}, Quart. J. Math. Oxford Ser. 27 (1976), N2, 25--32.




\bibitem{E} P. Erd\H os, \textit{A problem on independent r-tuples}, Ann. Univ. Sci. Budapest. 8 (1965) 93--95.




\bibitem{Fra3} P. Frankl,  \textit{The shifting technique in extremal set theory}, Surveys in combinatorics 123 (1987), 81--110.


\bibitem{F4} P. Frankl, \textit{Improved bounds for Erd\H os' Matching Conjecture}, Journ. of Comb. Theory Ser. A 120 (2013), 1068--1072.

\bibitem{F6} P. Frankl, {\it On the maximum number of edges in a hypergraph with a given matching number} Discrete Applied Mathematics Volume 216 (2017), N3, 562--581.

\bibitem{F7} P. Frankl, {\it Proof of the Erd\H os matching conjecture in a new range}, Isr. J. Math. 222 (2017), N1, 421--430.




\bibitem{FK21} P. Frankl and A. Kupavskii, {\it The Erd\H os Matching Conjecture and concentration inequalities}, Journal of Comb. Theory Ser B. 157 (2022), 366--400.













\bibitem{HLS} H. Huang, P.-S. Loh  and B. Sudakov, {\it The Size of a Hypergraph and its Matching Number}, Combinatorics, Probability and Computing 21 (2012), N3, 442--450.






\bibitem{KLLM} P. Keevash, N. Lifshitz, E. Long, D. Minzer, {\it Global hypercontractivity and its applications}, arXiv:2103.04604 (2021).

\bibitem{KLchv} N. Keller and N. Lifshitz, {\it The Junta Method for Hypergraphs and Chv\'atal's Simplex Conjecture}, Advances in Mathematics 392 (2021).


\bibitem{Kl} D.J. Kleitman, \textit{Maximal number of subsets of a finite set no $k$ of which are pairwise disjoint}, Journal of Comb. Theory 5 (1968), 157--163.






\bibitem{K49} A. Kupavskii, {\it  Rainbow version of the Erd\H os Matching Conjecture via Concentration}, Combinatorial Theory, to appear, 	arXiv:2104.08083










\end{thebibliography}
\end{document}